\theoremstyle{thmstyleone}%
\newtheorem{theorem}{Theorem}[section]
\newtheorem{proposition}[theorem]{Proposition}%
\newtheorem{lemma}[theorem]{Lemma}%
\newtheorem{corollary}[theorem]{Corollary}%
\newtheorem{example}[theorem]{Example}%
\newtheorem{remark}[theorem]{Remark}
\theoremstyle{thmstylethree}%
\newtheorem{definition}[theorem]{Definition}%
\begin{document}

\title[On the $(b, c)$-inverse of a sum with a radical element in a ring]{On the $(b, c)$-inverse of a sum with a radical element in a ring}


\author[1]{\fnm{Yukun} \sur{Zhou}}\email{2516856280@qq.com}

\author*[2]{\fnm{N\'{e}stor} \sur{ Thome}}\email{njthome@mat.upv.es}


\affil[1]{\orgdiv{School of Physical and Mathematical Sciences}, \orgname{Nanjing Tech University}, \orgaddress{\city{Nanjing}, \postcode{211816}, \country{China}}}

\affil[2]{\orgdiv{Instituto Universitario de Matem\'{a}tica Multidisciplinar}, \orgname{Universitat Polit\`{e}cnica de Val\`{e}ncia}, \orgaddress{\city{Valencia}, \postcode{ 46022}, \country{Spain}}}


\abstract{Let $R$ be a ring with identity and $J(R)$ be its Jacobson radical. Assume that $a\in R$ is $(b,c)$-invertible and $j_a,j_b,j_c\in J(R)$. This paper provides necessary and sufficient conditions for $a+j_a$ to be $(b+j_b,c+j_c)$-invertible. As an
	application, corresponding results on $(\widehat{B},\widehat{C})$-inverses of a dual matrix $\widehat{A}$ are derived.}


\keywords{$(b,c)$-inverse, Jacobson radical element, dual matrix,  reflexive inverse.}


\pacs[MSC Classification]{15A09; 16U90.}

\maketitle
\section { \bf Introduction}
Throughout the paper, let $R$ be a ring with identity and $J(R)$ be its Jacobson radical. As it is well known, if $a\in R$ is invertible and $j\in J(R)$, then $a+j$ is also invertible with $$(a+j)^{-1}=(1+a^{-1}j)^{-1}a^{-1}.$$ In 1986, Huylebrouck and Puystjens \cite{HuyPu} extended this result to several types of generalized inverses. They established necessary and sufficient conditions under which $a+j$ remains regular, Moore-Penrose invertible, or group invertible, provided that $a$ itself is, respectively, regular, Moore-Penrose invertible, or group invertible, and $j\in J(R)$. For further related results, we refer the reader  to \cite{3Hartwig,LCWX,Huy,YCh}. In particular, the following theorem  gives a key result concerning the Moore-Penrose inverse; herein, the Moore-Penrose inverse of $a$ is denoted by $a^{\dagger}$, and it is the only element that satisfies the four classical Moore-Penrose equations (see Definition \ref{bc2.1} below).

\begin{theorem}\label{bc1.1}\emph{\cite{HuyPu}} If $a\in R$ is Moore-Penrose invertible and $j\in J(R)$, then the following statements are equivalent:
	\begin{itemize}
		\item [{\rm (1)}] $a+j$ is Moore-Penrose invertible;
		\item [{\rm (2)}] $(1-aa^{\dagger})j(1+ a^{\dagger}j)^{-1}(1-a^{\dagger}a)=0$.
	\end{itemize}
\end{theorem}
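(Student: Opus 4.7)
My plan is to treat the two implications $(1) \Leftrightarrow (2)$ separately, starting from the structural decompositions that Moore-Penrose invertibility of $a$ provides. Writing $p = aa^\dagger$ and $q = a^\dagger a$, the identities $pa = a = aq$ immediately give the twin decompositions $a + j = au + (1-p)j = va + j(1-q)$, where $u := 1 + a^\dagger j$ and $v := 1 + ja^\dagger$. Because $J(R)$ is a two-sided ideal, both $u$ and $v$ are units; moreover $vj = ju$, whence $ju^{-1} = v^{-1}j$ and the condition in (2) has the equivalent left form $(1-p)\, v^{-1} j\, (1-q) = 0$.

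For $(2) \Rightarrow (1)$, I would first verify by a direct calculation that $u^{-1}a^\dagger$ is an inner inverse of $a+j$: substituting $a+j = au + (1-p)j$ and using $a^\dagger(1-p) = 0$ together with $a^\dagger j = u - 1$, the product $(a+j)u^{-1}a^\dagger(a+j)$ collapses to $(a+j) - (1-p)ju^{-1}(1-q)$, which equals $a+j$ precisely by hypothesis. The same calculation yields the factorization $a+j = (1+s)\, a\, u$ with $s := (1-p)ju^{-1}a^\dagger \in J(R)$, so $1+s$ is a unit and $a+j$ is already regular. To upgrade to Moore-Penrose invertibility, I would propose a candidate for $(a+j)^\dagger$ that twists $u^{-1}a^\dagger$ by a unit crafted to compensate the non-hermitian factor $1+s$ against the involution; the four Penrose equations then reduce to bookkeeping that uses $a^\dagger s = 0$, $sp = s$, and the hermitian character of $p$ and $q$.

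For $(1) \Rightarrow (2)$, assume $(a+j)^\dagger$ exists. Using $(1-p)a = 0$ and $a(1-q) = 0$ in the two decompositions gives $(1-p)(a+j) = (1-p)j$ and $(a+j)(1-q) = j(1-q)$; multiplying the first on the right by $u^{-1}(1-q)$ yields $(1-p)(a+j)u^{-1}(1-q) = (1-p)ju^{-1}(1-q)$. I would then substitute $a+j = (a+j)(a+j)^\dagger(a+j)$ in the left-hand side and exploit the hermiticity of the projection $(a+j)^\dagger(a+j)$, which (combined with $(a+j)^\dagger \in (a+j)^*R$) forces the $(1-q)$ factor on the right to annihilate the expression, producing $(1-p)ju^{-1}(1-q) = 0$.

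The main obstacle is the upgrade from regularity to Moore-Penrose invertibility in direction $(2) \Rightarrow (1)$. Regularity of $a+j$ comes essentially for free from (2), but Moore-Penrose invertibility is not preserved under arbitrary one-sided multiplication by units, so the candidate for $(a+j)^\dagger$ must be chosen so that the non-hermitian unit $1+s$ is symmetrized in a way compatible with the involution $*$ on $R$. The delicate interplay between the $J(R)$-smallness of $j$ and the $*$-structure is what makes this step the technical heart of the argument; the converse direction, once the correct identity in $(1-p)R(1-q)$ is set up, is comparatively formal.
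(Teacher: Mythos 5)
Your algebraic groundwork is sound: the decompositions $a+j=au+(1-p)j=va+j(1-q)$, the identity $(a+j)u^{-1}a^{\dagger}(a+j)=(a+j)-(1-p)ju^{-1}(1-q)$, and the factorization $a+j=(1+s)au$ with $s=(1-p)ju^{-1}a^{\dagger}\in J(R)$ all check out, so under (2) you correctly obtain that $a+j$ is regular with reflexive inverse $u^{-1}a^{\dagger}$. The direction $(1)\Rightarrow(2)$ is also fine, though your appeal to the hermiticity of $(a+j)^{\dagger}(a+j)$ is unnecessary: writing $t=(1-p)ju^{-1}(1-q)$ and substituting $a+j=(a+j)w(a+j)$ for any inner inverse $w$ gives $t=(1-p)jw\,t$ with $(1-p)jw\in J(R)$, hence $t=0$; that is, (2) follows from mere regularity of $a+j$ (this is exactly Lemma \ref{bc3.1}).

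The genuine gap is in $(2)\Rightarrow(1)$, precisely where you place the ``technical heart'': regularity of $a+j$ does not imply Moore--Penrose invertibility in a general ring with involution, and the unit that is supposed to symmetrize $1+s$ is never exhibited. Saying that the four Penrose equations ``reduce to bookkeeping'' once a candidate is chosen begs the question, because constructing that candidate (the explicit Huylebrouck--Puystjens formula, or an equivalent device such as showing that a suitable element of the form $mm^{*}+1-mm^{-}$ is a unit) is the entire content of the theorem; without it the proof establishes only the regularity criterion, not statement (1). Note that the paper itself does not prove Theorem \ref{bc1.1} head-on: it recovers it from Theorem \ref{bc3.3} by viewing the Moore--Penrose inverse of $a+j$ as its $(a^{*}+j^{*},a^{*}+j^{*})$-inverse with $j^{*}\in J(R)$. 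In that framework the existence criterion collapses to the regularity of the perturbed $b$ and $c$ (both equal to $(a+j)^{*}$, hence equivalent to your condition (2)), and the inverse comes with an explicit formula, so the symmetrization difficulty you run into never arises. Either supply the explicit candidate for $(a+j)^{\dagger}$ and verify the four equations, or route the argument through the $(b,c)$-inverse as the paper does.
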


Now, our attention shifts to dual matrices and their generalized inverses. A dual number $\widehat{a}$ \cite{Clifford1871} is defined as an expression of the form $$\widehat{a}=a+\varepsilon a_0,$$ where $a,a_0\in \mathbb{R}$ and $\varepsilon$  is the dual unit satisfying  $$\varepsilon\neq 0,~0\varepsilon=\varepsilon 0=0,~1\varepsilon=\varepsilon 1=\varepsilon,~\varepsilon^2=0.$$ The set of all dual numbers is denoted by $\mathbb{D}$, and it  forms a ring under addition and multiplication. A dual matrix $\widehat{A}\in \mathbb{D}^{m\times n}$ can be written as $\widehat{A}=A+\varepsilon A_0$, where $A$ and $A_0$ are real matrices. The transpose of $\widehat{A}$ is defined as $\widehat{A}^{\mathrm T}=A^{\mathrm T} +\varepsilon A_0^{\mathrm T}$.

The theory of dual matrices plays  a significant role in various fields, including kinematic analysis and synthesis,  robotics (cf. \cite{Pennestri2016,Brodsky1998,GuLuh1987,Hei1986}). In particular, generalized inverses of dual matrices are widely used in the study of kinematic problems and in solving linear dual equations (cf. \cite{Falco2018,UdMMT2021,WangGao2023,Wanghongxing2021}). In 2020, Udwadia et al. \cite{UdPen2020} pointed out that dual matrices may not always possess Moore-Penrose inverses. Subsequently, Wang \cite{Wanghongxing2021} established a necessary and sufficient condition which ensures the Moore-Penrose invertibility of a dual matrix, as stated below.

\begin{theorem}\label{bc1.2}\emph{\cite{Wanghongxing2021}} Let $\widehat{A}=A+\varepsilon A_0\in \mathbb{D}^{m\times n}$. The following statements are equivalent:
	\begin{itemize}
		\item [{\rm (1)}] $\widehat{A}$ is Moore-Penrose invertible;
		\item [{\rm (2)}] $(I-AA^{\dagger})A_0(I-A^{\dagger}A)=0$.
	\end{itemize}
\end{theorem}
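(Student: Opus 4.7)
The plan is to derive Theorem~\ref{bc1.2} as a direct specialisation of Theorem~\ref{bc1.1} by recognising the dual matrix $\widehat{A} = A + \varepsilon A_0$ as a perturbation $a + j$ with $a = A$ and $j = \varepsilon A_0$, and then showing that $\varepsilon A_0$ automatically sits inside the Jacobson radical of the ambient dual matrix ring. Once this is set up, the final criterion should drop out of the general formula in Theorem~\ref{bc1.1} by collapsing a Neumann series via $\varepsilon^{2} = 0$.

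Concretely, in the square case I would first work inside $\mathbb{D}^{n \times n}$ equipped with the transpose as involution, and observe that any element of the form $\varepsilon M$ with $M \in \mathbb{R}^{n \times n}$ is nilpotent of index $2$, hence lies in $J(\mathbb{D}^{n \times n})$. In particular $\varepsilon A_0 \in J(\mathbb{D}^{n \times n})$, and a short check confirms that the Moore-Penrose inverse of $A$, viewed as an element of $\mathbb{D}^{n \times n}$ with zero dual part, coincides with the classical $A^{\dagger}$. Applying Theorem~\ref{bc1.1} then yields that $\widehat{A}$ is Moore-Penrose invertible if and only if
$$(I - A A^{\dagger})(\varepsilon A_0)\bigl(I + \varepsilon A^{\dagger} A_0\bigr)^{-1}(I - A^{\dagger} A) = 0.$$
Since $(\varepsilon A^{\dagger} A_0)^{2} = 0$, the Neumann expansion collapses to $(I + \varepsilon A^{\dagger} A_0)^{-1} = I - \varepsilon A^{\dagger} A_0$; substituting and discarding the cross term, which carries a factor $\varepsilon^{2}$, reduces the condition to
$$\varepsilon\,(I - A A^{\dagger})\,A_0\,(I - A^{\dagger} A) = 0,$$
which, because a dual matrix vanishes precisely when both its real and dual parts do, is equivalent to condition~(2) of the theorem.

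The rectangular case $m \neq n$ is the step where the plan could bog down, since Theorem~\ref{bc1.1} is stated for elements of a unital ring while the $m \times n$ dual matrices do not themselves form one. I would handle it by embedding $\widehat{A}$ into $\mathbb{D}^{(m+n) \times (m+n)}$ via the block construction $\widetilde{A} = \begin{pmatrix} 0 & \widehat{A} \\ 0 & 0 \end{pmatrix}$ and showing both that Moore-Penrose invertibility transfers between $\widehat{A}$ and $\widetilde{A}$ (the real-matrix calculation $\widetilde{A}^{\dagger} = \begin{pmatrix} 0 & 0 \\ A^{\dagger} & 0 \end{pmatrix}$ already checks all four Moore-Penrose equations and extends formally to dual entries), and that the block-level criterion reduces exactly to $(I - AA^{\dagger})\,A_0\,(I - A^{\dagger} A) = 0$. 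The main obstacle is thus not the algebra, which is short, but the careful bookkeeping needed to make this passage to the square setting faithful with respect to both the involution and the Jacobson radical.
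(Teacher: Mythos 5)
Your proposal follows essentially the same route as the paper, which also derives Theorem~\ref{bc1.2} from Theorem~\ref{bc1.1} by taking $a=A$, $j=\varepsilon A_0$ in $\mathbb{D}^{n\times n}$, padding with zero rows or columns in the rectangular case, and collapsing $(I+\varepsilon A^{\dagger}A_0)^{-1}$ via $\varepsilon^{2}=0$ to get $\varepsilon(I-AA^{\dagger})A_0(I-A^{\dagger}A)=0$. One small repair: ``nilpotent of index $2$, hence in $J(\mathbb{D}^{n\times n})$'' is a non sequitur (nilpotent matrices over a field need not lie in the radical); the correct justification is that $\varepsilon$ is central and $\varepsilon\,\mathbb{R}^{n\times n}$ is a two-sided ideal with square zero, hence a nil ideal contained in $J(\mathbb{D}^{n\times n})$ --- equivalently, $I-\widehat{R}\,\varepsilon M$ is always invertible with inverse $I+\widehat{R}\,\varepsilon M$. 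With that fix the argument is complete.
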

Noting that $\mathbb{D}^{n\times n}$ also possesses a ring structure and that  $\varepsilon A_0$ lies in the Jacobson radical of $\mathbb{D}^{n\times n}$,  it is worth highlighting the interesting remark that Theorem  \ref{bc1.1} provides an explanation for Theorem  \ref{bc1.2}. Indeed, if $\widehat{A}$ is not square, it can be made square   by adding  adequately zero columns or zero rows. Furthermore, it is  straightforward to verify $(I-AA^{\dagger})(\varepsilon A_0)(I+ \widehat{A}^{\dagger}(\varepsilon A_0))^{-1}(I-A^{\dagger}A)=\varepsilon(I-AA^{\dagger})A_0(I-A^{\dagger}A)$.

The present article focuses on the $(b,c)$-inverse of a sum with Jacobson radical element. The concept of the $(b,c)$-inverse was introduced by the  distinguished  algebraist M.P. Drazin \cite{Drazin2012}, and serves as a unifying framework for many known generalized inverses,  including the Moore-Penrose inverse, the group inverse, the core inverse \cite{BT},  and the Drazin inverse \cite{D}. Let $R$ be  a ring with involution $\ast$, where an involution $a\mapsto a^*$ is an anti-isomorphism satisfying $(a+b)^{*}=a^{*}+b^{*}, (ab)^{*}=b^{*}a^{*}, (a^{*})^{*}=a$ for any $a, b\in R.$
For an element $a \in R$, the following correspondences hold:
\begin{itemize}
	\item [{\rm (1)}] the Moore-Penrose inverse of $a$ is its $(a^*,a^*)$-inverse;
	\item [{\rm (2)}] the group inverse of $a$ is its $(a,a)$-inverse;
	\item [{\rm (3)}] the core inverse of $a$ is its $(a,a^*)$-inverse;
	\item [{\rm (4)}] the Drazin inverse of $a$ is its $(a^k,a^k)$-inverse for some nonnegative integer $k$.
\end{itemize}
For more results of the $(b,c)$-inverse,  we refer the reader to  \cite{Dra2,2016kewangchen,Mary2012,Mosic2018,ZhuHuiHui2023,ZhuWu2021}.

The present paper first establishes necessary and sufficient conditions for the $(b,c)$-invertibility of a sum with a radical element. Subsequently, the expressions and idempotence of  generalized inverses of a sum with a radical element are investigated. Finally, the obtained results are applied to the case of dual matrices.

\section {Preliminaries}
In this section, we present some  definitions and notations.

\begin{definition}\cite{P}\label{bc2.1} Let $R$ be a ring with involution $\ast$ and $a\in R$. An element $x\in R$ that satisfies the following four equations:
	$${\rm (1)} ~axa=a,~~{\rm (2)}~ xax=x,~~{\rm (3)}~ (ax)^{\ast}=ax,~~{\rm (4)}~(xa)^{\ast}=xa,$$
	is called the Moore-Penrose inverse of $a$ and $a$ is said to be Moore-Penrose invertible. Such an $x$  is unique and denoted by $a^{\dagger}$.
\end{definition}
The element $x$ is called a $\{1\}$-inverse of $a$ (or $a$ is regular) if the equation ${\rm (1)}$ holds.  The element $x$ is called a $\{2\}$-inverse of $a$ if the equation ${\rm (2)}$ holds. The element $x$ is called a $\{1,2\}$-inverse (or a reflexive inverse) of $a$ if $x$ satisfies equations ${\rm (1)}$ and ${\rm (2)}$. The symbols $a\{1\}$, $a\{2\}$ and $a\{1,2\}$ denote the sets of all $\{1\}$-inverses,  $\{2\}$-inverses and $\{1,2\}$-inverses of $a$, respectively.

\begin{definition}\cite{D} Let $a\in R$. An element $x\in R$ that satisfies
	$$xax=x,~xa=ax,~xa^{k+1}=a^{k}~\text{for~some~nonnegative~integer}~k,$$
	is called the Drazin inverse of $a$, and  $a$ is said to be  Drazin invertible. Such an $x$ is unique and denoted by $a^{D}$. In particular, when $k=1$, $x$ is called the group inverse of $a$ and is denoted by $a^{\#}$.
\end{definition}
If $k$ is the smallest nonnegative integer such that  above equations hold, then $k$ is called the Drazin index of $a$ and denoted by ${\rm i}(a).$

\begin{definition}\cite{BT,RDD,XCZ} Let $a\in R$. An element $x\in R$   that satisfies
	$$xa^2=a,~~~ax^2=x, ~~~(ax)^{*}=ax,$$
	is called the core inverse of $a$, and $a$ is said to be core invertible. Such an $x$ is unique and denoted by $a^{\scriptsize\textcircled{\tiny \#}}$.
\end{definition}

Now, recall the notion of the $(b,c)$-inverse.

\begin{definition}\cite{Drazin2012} Let $a,b,c\in R$. An element $y\in R$ that satisfies
	$$cay=c,~~yab=b,~~y\in bR\cap Rc,$$
	is called the $(b,c)$-inverse of $a$, and $a$ is said to be  $(b,c)$-invertible. Such an element $y$ is unique and denoted by $a^{\|(b,c)}$.
\end{definition}

If $b=c$, then $a$ is said to be invertible along $b$ \cite{Mary}, and $y$ is denoted by $a^{\|b}$.

\section{$(b,c)$-inverse of a sum with a radical element}
Let $a\in R$ with a $\{2\}$-inverse $x$, and $j_a\in J(R)$. It is well known that $(1+xj_a)^{-1}x$ is a $\{2\}$-inverse of $a+j_a$. Indeed,
\begin{eqnarray*}
	&&(1+xj_a)^{-1}x(a+j_a)(1+xj_a)^{-1}x\\
	&=&(1+xj_a)^{-1}(xa+xj_a)(1+xj_a)^{-1}x\\
	&=&(1+xj_a)^{-1}(xa+xaxj_a)(1+xj_a)^{-1}x\\
	&=&(1+xj_a)^{-1}xa(1+xj_a)(1+xj_a)^{-1}x\\
	&=&(1+xj_a)^{-1}xax=(1+xj_a)^{-1}x.
\end{eqnarray*}
A Similar argument shows that if $y$ is a $\{2\}$-inverse of $a+j_a$, then $(1-yj_a)^{-1}y$ is a $\{2\}$-inverse of $a$.
Define   mappings
\begin{eqnarray*}
	\varphi:~a\{2\}~&\longrightarrow&~(a+j_a)\{2\}\\
	x&\longmapsto& (1+xj_a)^{-1}x,
\end{eqnarray*}
and
\begin{eqnarray*}
	\psi:~(a+j_a)\{2\}~&\longrightarrow&~a\{2\}\\
	y&\longmapsto& (1-yj_a)^{-1}y.
\end{eqnarray*}
Since
\begin{eqnarray*}
	\psi\varphi(x)&=&\psi((1+xj_a)^{-1}x)=(1-(1+xj_a)^{-1}xj_a)^{-1}(1+xj_a)^{-1}x\\
	&=&(1+xj_a-(1+xj_a)(1+xj_a)^{-1}xj_a)^{-1}x=x,
\end{eqnarray*}
and $\varphi\psi(y)=y$, it follows that $\varphi$ is a bijection and  $\varphi^{-1}=\psi$.

\begin{lemma}\label{bc3.1}\emph{\cite{HuyPu}} Let $a\in R$ and $j_a\in J(R)$. If $a$ is regular with a reflexive inverse $a^+$, then $a+j_a$ is regular if and only if $$(1-aa^+)j_a(1+a^+j_a)^{-1}(1-a^+a)=0.$$ In this case, $(1+a^+j_a)^{-1}a^+$ is a reflexive inverse of $a+j_a$.
\end{lemma}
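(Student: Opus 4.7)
The plan is to handle the two implications separately, exploiting the bijection $\varphi$ recorded just above the lemma. Since $a^{+}j_{a}\in J(R)$, the element $1+a^{+}j_{a}$ is a unit of $R$, so $y:=(1+a^{+}j_{a})^{-1}a^{+}$ is well defined; applying $\varphi$ to the $\{2\}$-inverse $x=a^{+}$ of $a$ gives at once that $y\in (a+j_{a})\{2\}$. The lemma therefore reduces to proving that $a+j_{a}$ admits a $\{1\}$-inverse if and only if $y$ itself is a $\{1\}$-inverse of $a+j_{a}$, and moreover that this is equivalent to the displayed identity.

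For the direction $(2)\Rightarrow(1)$ I would verify directly that $(a+j_{a})\,y\,(a+j_{a})=a+j_{a}$. The rewriting $a+j_{a}=a(1+a^{+}j_{a})+(1-aa^{+})j_{a}$ yields
\[(a+j_{a})(1+a^{+}j_{a})^{-1}a^{+}=aa^{+}+(1-aa^{+})j_{a}(1+a^{+}j_{a})^{-1}a^{+},\]
and using $aa^{+}a=a$ together with the telescoping identity
\[(1+a^{+}j_{a})^{-1}a^{+}(a+j_{a})=1-(1+a^{+}j_{a})^{-1}(1-a^{+}a),\]
the difference $(a+j_{a})y(a+j_{a})-(a+j_{a})$ collapses to $-(1-aa^{+})j_{a}(1+a^{+}j_{a})^{-1}(1-a^{+}a)$, which vanishes exactly under the hypothesis.

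The harder direction is $(1)\Rightarrow(2)$, because the reflexive inverse of $a+j_{a}$ is now arbitrary and not presented in the form $y$. My plan is to premultiply by the unit $(1+j_{a}a^{+})^{-1}$ (available since $j_{a}a^{+}\in J(R)$) and apply the ``swap'' identity $(1+j_{a}a^{+})j_{a}=j_{a}(1+a^{+}j_{a})$ to obtain
\[(1+j_{a}a^{+})^{-1}(a+j_{a})=a+w,\qquad w:=j_{a}(1+a^{+}j_{a})^{-1}(1-a^{+}a).\]
Since $1+j_{a}a^{+}$ is a unit, $a+w$ is regular whenever $a+j_{a}$ is, so there exists $u\in R$ with $(a+w)u(a+w)=a+w$. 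Using the elementary relations $(1-aa^{+})a=0$, $a(1-a^{+}a)=0$ and $w(1-a^{+}a)=w$, I would multiply this identity on the left by $1-aa^{+}$ and on the right by $1-a^{+}a$ to reach
\[(1-aa^{+})w\,(1-uw)=0.\]
The decisive step is now that $w\in J(R)$ forces $uw\in J(R)$, so $1-uw$ is a unit and can be cancelled, yielding $(1-aa^{+})w=0$, which is exactly the displayed condition. The main obstacle is precisely that the chosen $u$ is uncontrolled; the argument succeeds only because $J(R)$ is a two-sided ideal, which lets us bypass any explicit knowledge of $u$.
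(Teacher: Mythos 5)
Your proof is correct. The paper itself gives no proof of this lemma (it is quoted from \cite{HuyPu}), so there is nothing internal to compare against; your argument is a valid self-contained reconstruction. Both directions check out: the decomposition $a+j_a=a(1+a^+j_a)+(1-aa^+)j_a$ together with the telescoping identity $(1+a^+j_a)^{-1}a^+(a+j_a)=1-(1+a^+j_a)^{-1}(1-a^+a)$ does reduce $(a+j_a)y(a+j_a)-(a+j_a)$ to $-(1-aa^+)j_a(1+a^+j_a)^{-1}(1-a^+a)$, and since $y=\varphi(a^+)$ is already a $\{2\}$-inverse, this settles sufficiency and the ``in this case'' claim simultaneously. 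For necessity, the swap identity $(1+j_aa^+)j_a=j_a(1+a^+j_a)$ correctly yields $(1+j_aa^+)^{-1}(a+j_a)=a+w$ with $w=j_a(1+a^+j_a)^{-1}(1-a^+a)$, and sandwiching an arbitrary regularity equation for $a+w$ between $1-aa^+$ and $1-a^+a$ gives $(1-aa^+)w(1-uw)=0$; the cancellation of $1-uw$ via $w\in J(R)$ is exactly the right way to neutralize the uncontrolled inner inverse $u$. This is essentially the argument of Huylebrouck--Puystjens, and it is also consistent with how the paper later manipulates $b+j_b=(1+j_bb^+)\bigl(b+(1+j_bb^+)^{-1}j_b(1-b^+b)\bigr)$ in the proof of Theorem 3.3.
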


Let $a,~b,~c\in R$. Recall that if $a^{\|(b,c)}$ exists, then $b$ and $c$ are regular \cite{Wanglong2017}. In the following theorem, we denote reflexive inverses of $b$ and $c$ by $b^+$ and $c^+$, respectively.

\begin{theorem}\label{bc3.3} Let $a,~b,~c\in R$ and $j_a,~j_b,~j_c\in J(R)$. If $a^{\|(b,c)}$ exists, then the following statements are equivalent:
	\begin{itemize}
		\item [{\rm (1)}] $(a+j_a)^{\|(b+j_b,c+j_c)}$ exists;
		\item [{\rm (2)}] $b+j_b$ and $c+j_c$ are regular;
		\item [{\rm (3)}] $(1-bb^+)j_b(1+b^+j_b)^{-1}(1-b^+b)=0$ and $(1-cc^+)j_c(1+c^+j_c)^{-1}(1-c^+c)=0$.
	\end{itemize}
	In this case, $$(a+j_a)^{\|(b+j_b,c+j_c)}=(1+j_bb^+)a^{\|(b,c)}(1+ja^{\|(b,c)})^{-1}(1+c^+j_c),$$
	where $$j=j_a+aj_bb^++c^+j_ca+j_aj_bb^++c^+j_cj_a+c^+j_caj_bb^++c^+j_cj_aj_bb^+.$$
	Moreover, $a^{\|(b+j_b,c+j_c)}$ exists and
	$$a^{\|(b+j_b,c+j_c)}=(1+j_bb^+)(1+a^{\|(b,c)}(aj_bb^++c^+j_ca+c^+j_caj_bb^+))^{-1}a^{\|(b,c)}(1+c^+j_c),$$ $$(a+j_a)^{\|(b+j_b,c+j_c)}=\varphi(a^{\|(b+j_b,c+j_c)})=(1+a^{\|(b+j_b,c+j_c)}j_a)^{-1}a^{\|(b+j_b,c+j_c)}.$$
\end{theorem}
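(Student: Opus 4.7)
The plan is to clear the two easy implications first and then build $(a+j_a)^{\|(b+j_b,c+j_c)}$ by a conjugation trick. The equivalence $(2)\Leftrightarrow(3)$ is immediate from Lemma~\ref{bc3.1} applied separately to the pairs $(b,j_b)$ and $(c,j_c)$, and $(1)\Rightarrow(2)$ follows from the regularity requirement recalled just before the theorem. So the real task is $(2)\Rightarrow(1)$ together with the explicit formulas.

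Setting $u := 1+j_bb^+$ and $v := 1+c^+j_c$, both invertible with inverses in $1+J(R)$, a routine expansion gives $v(a+j_a)u = a+j$ with $j\in J(R)$ exactly as in the statement. As a stepping stone I would establish the following auxiliary fact: whenever $a^{\|(b,c)} = x$ exists and $j\in J(R)$, then $a+j$ is $(b,c)$-invertible with $(a+j)^{\|(b,c)} = x(1+jx)^{-1} = (1+xj)^{-1}x =: \tilde x$. The computation $c(a+j)\tilde x = (cax+cjx)(1+jx)^{-1} = c(1+jx)(1+jx)^{-1} = c$ and its dual $\tilde x(a+j)b = b$ are immediate, while $\tilde x\in bR\cap Rc$ is inherited from $x$.

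With $\tilde x$ available I would propose $y := u\tilde x v$ as the candidate for $(a+j_a)^{\|(b+j_b,c+j_c)}$ and check the three defining conditions. Using $(a+j_a)u = v^{-1}(a+j)$ one rewrites $(c+j_c)(a+j_a)y = (c+j_c)v^{-1}\cdot (a+j)\tilde x\cdot v$; expanding $(c+j_c)v^{-1} = c + (1-cc^+)j_c(1+c^+j_c)^{-1}$ and invoking condition (3) for $c$, the second summand lies in $Rc$, so $(c+j_c)v^{-1}\in Rc$, and since $c(a+j)\tilde x = c$ the idempotent $(a+j)\tilde x$ fixes every element of $Rc$, yielding $(c+j_c)(a+j_a)y = c+j_c$. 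The axiom $y(a+j_a)(b+j_b) = b+j_b$ is symmetric via $u^{-1}(b+j_b)\in bR$. The membership $y\in(b+j_b)R\cap R(c+j_c)$ needs no appeal to condition (3): from $bb^+\tilde x=\tilde x$ one gets $u\tilde x = (b+j_b)b^+\tilde x\in(b+j_b)R$, and dually $\tilde x v = \tilde x c^+(c+j_c)\in R(c+j_c)$.

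The formula for $a^{\|(b+j_b,c+j_c)}$ comes from the same construction applied with $j_a=0$, in which case $j$ collapses to $aj_bb^+ + c^+j_ca + c^+j_caj_bb^+$. The identity $(a+j_a)^{\|(b+j_b,c+j_c)} = \varphi(a^{\|(b+j_b,c+j_c)})$ is cleanest to prove by showing directly that $\varphi(z) = (1+zj_a)^{-1}z$ inherits the three $(b+j_b,c+j_c)$-axioms from $z$ (the computations are exactly parallel to the auxiliary fact above) and then invoking uniqueness. The hard part will be recognizing that condition (3) is precisely what encodes the absorption identities $(c+j_c)v^{-1}\in Rc$ and $u^{-1}(b+j_b)\in bR$, and that these absorptions are exactly what transfers the $(b,c)$-idempotents of $a+j$ back to the $(b+j_b,c+j_c)$-idempotents of $a+j_a$.
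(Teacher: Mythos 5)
Your proposal is correct and follows essentially the same route as the paper: the same candidate $u\,(a+j)^{\|(b,c)}\,v$ built from the conjugation $v(a+j_a)u=a+j$, with condition (3) used exactly as the paper uses it, namely to absorb $(c+j_c)v^{-1}$ into $Rc$ and $u^{-1}(b+j_b)$ into $bR$. The only difference is organizational: you establish the $j_b=j_c=0$ case (the paper's Corollary~\ref{bc3.2}) first as an auxiliary lemma, whereas the paper deduces it afterwards as a special case of the theorem.
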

\begin{proof} (1)$\Rightarrow$(2): It is immediate from \cite{Wanglong2017}.
	
	(2)$\Rightarrow$(3): It follows from Lemma \ref{bc3.1}.
	
	(3)$\Rightarrow$(1): Since  $bb^+a^{\|(b,c)}=a^{\|(b,c)}$, we have
	\begin{eqnarray*}
		&&(1+j_bb^+)a^{\|(b,c)}=(1+j_bb^+)bb^+a^{\|(b,c)}\\
		&=&(bb^++j_bb^+)a^{\|(b,c)}=(b+j_b)b^+a^{\|(b,c)} \in (b+j_b)R,
	\end{eqnarray*}
	which implies $$(1+j_bb^+)a^{\|(b,c)}(1+ja^{\|(b,c)})^{-1}(1+c^+j_c)\in (b+j_b)R.$$
	Using the equality $a^{\|(b,c)}(1+ja^{\|(b,c)})^{-1}=(1+a^{\|(b,c)}j)^{-1}a^{\|(b,c)}$ and the fact that $a^{\|(b,c)}c^+c=a^{\|(b,c)}$, we also obtain $$(1+j_bb^+)a^{\|(b,c)}(1+ja^{\|(b,c)})^{-1}(1+c^+j_c)\in R(c+j_c).$$
	Since  $(1-bb^+)j_b(1+b^+j_b)^{-1}(1-b^+b)=0$, it follows
	\begin{eqnarray*}
		b+j_b&=&b+j_bb^+b+j_b(1-b^+b)\\
		&=&(1+j_bb^+)b+(1+j_bb^+)(1+j_bb^+)^{-1}j_b(1-b^+b)\\
		&=&(1+j_bb^+)(b+(1+j_bb^+)^{-1}j_b(1-b^+b))\\
		&=&(1+j_bb^+)(b+bb^+(1+j_bb^+)^{-1}j_b(1-b^+b))\\
		&=&(1+j_bb^+)b(1+(1+b^+j_b)^{-1}b^+j_b(1-b^+b)),
	\end{eqnarray*}
	
	and
	
	\begin{eqnarray*}
		&&a^{\|(b,c)}(1+c^+j_c)(a+j_a)(b+j_b)\\
		&=&a^{\|(b,c)}(1+c^+j_c)(a+j_a)(1+j_bb^+)b(1+(1+b^+j_b)^{-1}b^+j_b(1-b^+b))\\
		&=&(b+a^{\|(b,c)}jb)(1+(1+b^+j_b)^{-1}b^+j_b(1-b^+b)).
	\end{eqnarray*}
	Consequently,
	\begin{eqnarray*}
		&&(1+j_bb^+)a^{\|(b,c)}(1+ja^{\|(b,c)})^{-1}(1+c^+j_c)(a+j_a)(b+j_b)\\
		&=&(1+j_bb^+)(1+a^{\|(b,c)}j)^{-1}a^{\|(b,c)}(1+c^+j_c)(a+j_a)(b+j_b)\\
		&=&(1+j_bb^+)(1+a^{\|(b,c)}j)^{-1}(b+a^{\|(b,c)}jb)(1+(1+b^+j_b)^{-1}b^+j_b(1-b^+b))\\
		&=&(1+j_bb^+)b(1+(1+b^+j_b)^{-1}b^+j_b(1-b^+b))=b+j_b.
	\end{eqnarray*}
	A similar argument shows that $$(c+j_c)(a+j_a)(1+j_bb^+)a^{\|(b,c)}(1+ja^{\|(b,c)})^{-1}(1+c^+j_c)=c+j_c.$$
	Therefore, $a+j_a$ is $(b+j_b,c+j_c)$-invertible and
	$$(a+j_a)^{\|(b+j_b,c+j_c)}=(1+j_bb^+)a^{\|(b,c)}(1+ja^{\|(b,c)})^{-1}(1+c^+j_c).$$
	
	In this case, by taking $j_a=0$,  we conclude that $a^{\|(b+j_b,c+j_c)}$ exists and  is given by
	$$a^{\|(b+j_b,c+j_c)}=(1+j_bb^+)(1+a^{\|(b,c)}(aj_bb^++c^+j_ca+c^+j_caj_bb^+))^{-1}a^{\|(b,c)}(1+c^+j_c).$$
	We now claim $(a+j_a)^{\|(b+j_b,c+j_c)}=\varphi(a^{\|(b+j_b,c+j_c)}).$ Since $a^{\|(b+j_b,c+j_c)}\in (b+j_b)R\cap R(c+j_c)$, it follows
	\begin{eqnarray*}
		&&\varphi(a^{\|(b+j_b,c+j_c)})=(1+a^{\|(b+j_b,c+j_c)}j_a)^{-1}a^{\|(b+j_b,c+j_c)}\\
		&=&a^{\|(b+j_b,c+j_c)}(1+j_a a^{\|(b+j_b,c+j_c)})^{-1}\in (b+j_b)R\cap R(c+j_c).
	\end{eqnarray*}
	A direct computation then yields
	\begin{eqnarray*}
		&&(c+j_c)(a+j_a)\varphi(a^{\|(b+j_b,c+j_c)})\\
		&=&(c+j_c)(a+j_a)a^{\|(b+j_b,c+j_c)}(1+j_a a^{\|(b+j_b,c+j_c)})^{-1}\\
		&=&(c+j_c)(aa^{\|(b+j_b,c+j_c)}+j_a a^{\|(b+j_b,c+j_c)})(1+j_a a^{\|(b+j_b,c+j_c)})^{-1}\\
		&=&(c+j_c)(1+j_a a^{\|(b+j_b,c+j_c)})(1+j_a a^{\|(b+j_b,c+j_c)})^{-1}=c+j_c
	\end{eqnarray*}
	and $\varphi(a^{\|(b+j_b,c+j_c)})(a+j_a)(b+j_b)=(b+j_b)$, which completes the proof.
\end{proof}

\begin{corollary}\label{bc3.2}  Let $a,~b,~c\in R$ and $j_a\in J(R)$. Then $a^{\|(b,c)}$ exists if and only if $(a+j_a)^{\|(b,c)}$ exists. In this case, $(a+j_a)^{\|(b,c)}=\varphi(a^{\|(b,c)})=(1+a^{\|(b,c)}j_a)^{-1}a^{\|(b,c)}$.
\end{corollary}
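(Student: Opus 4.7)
The plan is to read Corollary~\ref{bc3.2} off as the specialization of Theorem~\ref{bc3.3} to $j_b = j_c = 0$, together with a short symmetry argument to upgrade the implication into an equivalence. Observe that when $j_b = j_c = 0$, the two expressions appearing in Theorem~\ref{bc3.3}(3) collapse to $0$ automatically for any choice of reflexive inverses $b^{+}$ and $c^{+}$, so condition~(3) of that theorem is vacuously satisfied. Hence the implication $(3)\Rightarrow(1)$ of Theorem~\ref{bc3.3} gives the forward direction at once: if $a^{\|(b,c)}$ exists, then so does $(a+j_a)^{\|(b,c)}$.

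For the reverse direction I would exploit the symmetry of the hypothesis under the substitution $(a, j_a) \mapsto (a+j_a, -j_a)$. Since $-j_a \in J(R)$, applying the forward direction just established to the element $a+j_a$ with perturbation $-j_a$ yields that $(a+j_a) + (-j_a) = a$ is $(b,c)$-invertible, i.e.\ $a^{\|(b,c)}$ exists.

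To obtain the closed form, I would substitute $j_b = j_c = 0$ into the element $j$ of Theorem~\ref{bc3.3}; six of the seven summands vanish, leaving $j = j_a$. The main formula of Theorem~\ref{bc3.3} then collapses to $(a+j_a)^{\|(b,c)} = a^{\|(b,c)}(1 + j_a a^{\|(b,c)})^{-1}$. I would then invoke the standard pull-through identity $x(1+yx)^{-1} = (1+xy)^{-1}x$ with $x = a^{\|(b,c)}$ and $y = j_a$ to rewrite this as $(1 + a^{\|(b,c)} j_a)^{-1} a^{\|(b,c)}$, which is precisely $\varphi(a^{\|(b,c)})$ by the definition of $\varphi$ given at the start of Section~3.

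The one auxiliary point worth checking is that $a^{\|(b,c)} \in a\{2\}$, so that $\varphi$ is actually defined on it; this follows from $a^{\|(b,c)} = bu$ for some $u \in R$ together with the defining identity $a^{\|(b,c)} a b = b$, which yield $a^{\|(b,c)} a a^{\|(b,c)} = a^{\|(b,c)} a b u = bu = a^{\|(b,c)}$. I do not foresee any real obstacle: the result is a clean specialization of Theorem~\ref{bc3.3}, with the one small care being that the reverse implication must proceed via the symmetric substitution rather than by naive specialization, since Theorem~\ref{bc3.3} is stated under the standing hypothesis that $a^{\|(b,c)}$ already exists.
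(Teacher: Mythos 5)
Your proposal is correct and is exactly the intended derivation: the paper states this corollary without proof as an immediate specialization of Theorem~\ref{bc3.3} to $j_b=j_c=0$ (where condition (3) holds vacuously and $j$ reduces to $j_a$), and the converse via the substitution $(a,j_a)\mapsto(a+j_a,-j_a)$ is the standard symmetry argument. Your auxiliary check that $a^{\|(b,c)}\in a\{2\}$ is also accurate and matches the facts the paper uses implicitly.
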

Given invertible elements $a,b\in R$, the classical absorption law is known as $a^{-1}(a+b)b^{-1}=a^{-1}+b^{-1}$. Such a property is extended to cases of generalized inverses.
According to \cite[Remark 3.4]{Zhouyukun2025}, we obtain the following result on the absorption law for $\{2\}$-inverses.

\begin{corollary}\label{bc3.2} Let $a\in R$ and $j_a\in J(R)$. If $a$ and $a+j_a$ possess $\{2\}$-inverses $x$ and $y$, respectively, then the following statements are equivalent:
	\begin{itemize}
		\item[\rm(1)] $x(a+(a+j_a))y=x+y=y(a+(a+j_a))x$;
		\item[\rm(2)] $x(a+(a+j_a))y=x+y$;
		\item[\rm(3)] $xj_ay=x-y$;
		\item[\rm(4)] $xR=yR$ and $Rx=Ry$;
		\item[\rm(5)] $y=\varphi(x)=(1+xj_a)^{-1}x$;
		\item[\rm(6)] There exist $b,c\in R$ such that $x=a^{\|(b,c)}$ and $y=(a+j_a)^{\|(b,c)}$.
	\end{itemize}
\end{corollary}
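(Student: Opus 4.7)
The plan is to reduce most of the work to the cited reference and handle only the ``new'' statements (5) and (6) by hand. Specifically, \cite[Remark 3.4]{Zhouyukun2025} supplies the equivalence (1)$\Leftrightarrow$(2)$\Leftrightarrow$(3)$\Leftrightarrow$(4) for an arbitrary pair of $\{2\}$-inverses $x\in a\{2\}$ and $y\in b\{2\}$ in a general ring, with no Jacobson hypothesis required. Applying this with $b:=a+j_a$, it remains to weave (5) and (6) into the equivalence class; I would do so by establishing (3)$\Leftrightarrow$(5) and (5)$\Leftrightarrow$(6).

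For (3)$\Leftrightarrow$(5), the argument is direct algebra: the identity $xj_ay=x-y$ rearranges to $(1+xj_a)y=x$. Because $J(R)$ is a two-sided ideal, $xj_a\in J(R)$, so $1+xj_a$ is a unit, and hence the formula $y=(1+xj_a)^{-1}x=\varphi(x)$ is equivalent to (3).

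For (5)$\Leftrightarrow$(6), the direction (6)$\Rightarrow$(5) is immediate from Corollary \ref{bc3.2} (the first corollary of this section), which yields $(a+j_a)^{\|(b,c)}=\varphi(a^{\|(b,c)})$, i.e.\ $y=\varphi(x)$. For (5)$\Rightarrow$(6) the plan is to exhibit a suitable $(b,c)$-pair by setting $b=c=x$: from $xax=x$ one sees at once that $x=a^{\|(x,x)}$, so only $y=(a+j_a)^{\|(x,x)}$ remains. This requires (i) $y\in xR\cap Rx$, which follows from the already-established (5)$\Rightarrow$(4), and (ii) the absorption identities $x(a+j_a)y=x$ and $y(a+j_a)x=x$. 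To derive (ii), I would use the ``right-sided'' reformulation $y=x(1+j_ax)^{-1}$, obtained from the identity $(1+xj_a)\,x=x(1+j_ax)$; together with (5) this gives $xay=y$, $yax=y$, $xj_ay=x-y$, and $yj_ax=x-y$, whose pairwise sums furnish the two required absorption identities.

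The main obstacle I anticipate is producing the right-sided companions $yax=y$ and $yj_ax=x-y$: condition (3) is intrinsically one-sided and does not directly yield these. The remedy is to pass through (5), which supplies $y$ in both the left and right forms, after which the right-sided identities follow by the same manipulation and the verification of the $(b,c)$-inverse axioms becomes routine bookkeeping.
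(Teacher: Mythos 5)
Your proposal is correct, and it actually supplies more than the paper does: the paper offers no proof at all, simply attributing the whole corollary to \cite[Remark 3.4]{Zhouyukun2025}, so your explicit treatment of (3)$\Leftrightarrow$(5) and (5)$\Leftrightarrow$(6) fills in exactly the part that is new relative to that remark. Your key computations check out: $xj_ay=x-y$ rearranges to $(1+xj_a)y=x$ with $1+xj_a$ a unit; the intertwining identity $(1+xj_a)x=x(1+j_ax)$ gives $y=(1+xj_a)^{-1}x=x(1+j_ax)^{-1}\in xR\cap Rx$; and from these one gets $xay=y$, $yax=y$, $xj_ay=yj_ax=x-y$, hence $x(a+j_a)y=y(a+j_a)x=x$, so that $y=(a+j_a)^{\|(x,x)}$ while $x=a^{\|(x,x)}$ is immediate from $xax=x$. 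The choice $b=c=x$ is a clean witness for (6), and (6)$\Rightarrow$(5) does follow from the preceding corollary together with uniqueness of the $(b,c)$-inverse. One small caveat: your parenthetical claim that the equivalence (1)--(4) holds ``with no Jacobson hypothesis required'' is doubtful. The implications (1)$\Rightarrow$(2)$\Rightarrow$(3) are indeed purely formal for $x\in a\{2\}$, $y\in(a+j_a)\{2\}$, but (3)$\Rightarrow$(4) reads $x=(1+xj_a)y$ and $y=x(1-j_ay)$, which only yield the one-sided inclusions $Rx\subseteq Ry$ and $yR\subseteq xR$ unless $1+xj_a$ and $1-j_ay$ are units --- and that is precisely where $j_a\in J(R)$ enters. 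This does not create a gap in your argument, since the hypothesis is available here (and since (3)$\Rightarrow$(5)$\Rightarrow$(4) can be read off from your own formulas), but you should either drop that parenthetical or state the reference's hypotheses accurately.
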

\begin{proof} The equivalence of conditions (1), (4), (5), and (6) follows directly  from \cite[Remark 3.4]{Zhouyukun2025}. Furthermore, Condition (2) is equivalent to $(1+xj_a)y=x$, which can be rewritten as $y=(1+xj_a)^{-1}x$, i.e., condition (5). The equivalence between (3) and (5) is clear.
\end{proof}

The Moore-Penrose inverse, group inverse and core inverse of a sum with radical element were investigated in \cite{HuyPu,LCWX}. The following corollary recovers these results and provides new conditions and  expressions.

\begin{corollary} Let $R$ be equipped with an involution $*$, $a\in R$ and $j_a\in J(R)$. If $a$ is, respectively, Moore-Penrose invertible, group invertible and/or core invertible with a reflexive inverse $a^+$, then the following statements are equivalent:
	\begin{itemize}
		\item [{\rm (1)}] $a+j_a$ is, respectively, Moore-Penrose invertible, group invertible and/or core invertible;
		\item [{\rm (2)}] $a+j_a$ is regular;
		\item [{\rm (3)}] $(1-aa^+)j_a(1+a^+j_a)^{-1}(1-a^+a)=0$.
	\end{itemize}
	In these cases, the corresponding generalized inverses are given by:
	\begin{eqnarray*}
		(a+j_a)^{\dagger}&=&(1+j_a^*(a^+)^*)a^{\dagger}(1+j_1a^{\dagger})^{-1}(1+(a^+)^*j_a^*),\\
		(a+j_a)^{\#}&=&(1+j_aa^+)a^{\#}(1+j_2a^{\#})^{-1}(1+a^+j_a),\\
		(a+j_a)^{\scriptsize\textcircled{\tiny \#}}&=&(1+j_aa^+)a^{\scriptsize\textcircled{\tiny \#}}(1+j_3a^{\scriptsize\textcircled{\tiny \#}})^{-1}(1+(a^+)^*j_a^*),
	\end{eqnarray*}
	where
	\begin{eqnarray*}
		j_1&=&j_a+aj_a^*(a^+)^*+(a^+)^*j_a^*a+j_aj_a^*(a^+)^*+(a^+)^*j_a^*j_a\\
		&&+(a^+)^*j_a^*aj_a^*(a^+)^*+(a^+)^*j_a^*j_aj_a^*(a^+)^*,\\
		j_2&=&j_a+aj_aa^++a^+j_aa+j_a^2a^++a^+j_a^2+a^+j_aaj_aa^++a^+j_a^3a^+,\\
		j_3&=&j_a+aj_aa^++(a^+)^*j_a^*a+j_a^2a^++(a^+)^*j_a^*j_a+(a^+)^*j_a^*aj_aa^++(a^+)^*j_a^*j_a^2a^+.
	\end{eqnarray*}
	
\end{corollary}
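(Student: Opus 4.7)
The plan is to reduce each of the three cases to Theorem \ref{bc3.3} via the canonical identifications listed in the introduction: the Moore-Penrose inverse is the $(a^*,a^*)$-inverse, the group inverse is the $(a,a)$-inverse, and the core inverse is the $(a,a^*)$-inverse. Concretely, I would apply Theorem \ref{bc3.3} under the three parameter choices $(b,c,j_b,j_c) = (a^*,a^*,j_a^*,j_a^*)$, $(a,a,j_a,j_a)$, and $(a,a^*,j_a,j_a^*)$, taking as reflexive inverses $b^+=(a^+)^*$, $c^+=(a^+)^*$ in the first case; $b^+=c^+=a^+$ in the second; and $b^+=a^+$, $c^+=(a^+)^*$ in the third. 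Note that $(a^+)^*$ is a reflexive inverse of $a^*$ whenever $a^+$ is one of $a$.

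For the three-way equivalence: (1)$\Rightarrow$(2) is immediate, because the existence of any of the specified generalized inverses implies regularity of $a+j_a$. The equivalence (2)$\Leftrightarrow$(3) is exactly Lemma \ref{bc3.1} applied to $a$ with reflexive inverse $a^+$. For (2)$\Rightarrow$(1), the key observation is that in every one of the three cases $b+j_b$ and $c+j_c$ are each either $a+j_a$ or $a^*+j_a^*$; since regularity is preserved under the involution (adjoining a $\{1\}$-inverse yields a $\{1\}$-inverse of the adjoint), the regularity of $a+j_a$ forces regularity of both $b+j_b$ and $c+j_c$, so Theorem \ref{bc3.3} delivers the required $(b+j_b,c+j_c)$-inverse, which by the canonical correspondences equals $(a+j_a)^{\dagger}$, $(a+j_a)^{\#}$, or $(a+j_a)^{\scriptsize\textcircled{\tiny \#}}$.

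The explicit formulas are then obtained by substitution into $(a+j_a)^{\|(b+j_b,c+j_c)}=(1+j_bb^+)a^{\|(b,c)}(1+ja^{\|(b,c)})^{-1}(1+c^+j_c)$ from Theorem \ref{bc3.3}. Expanding the seven summands of $j=j_a+aj_bb^++c^+j_ca+j_aj_bb^++c^+j_cj_a+c^+j_caj_bb^++c^+j_cj_aj_bb^+$ under the three substitutions yields $j_1$, $j_2$, $j_3$ respectively, matching the stated expressions. The main obstacle is purely bookkeeping: one must carefully track the seven summands under each substitution and confirm that the resulting perturbation terms assemble into $j_1$, $j_2$, $j_3$ as claimed. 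A minor secondary check is that the statement is consistent with the choice of reflexive inverse $a^+$: condition (3) is independent of this choice by Lemma \ref{bc3.1}, and the final generalized inverses are themselves unique, so the formulas are well-defined regardless of which $a^+$ one picks.
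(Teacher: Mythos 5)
Your proposal is correct and is exactly the intended argument: the paper offers no separate proof of this corollary, treating it as the direct specialization of Theorem \ref{bc3.3} under the substitutions $(b,c,j_b,j_c)=(a^*,a^*,j_a^*,j_a^*)$, $(a,a,j_a,j_a)$, $(a,a^*,j_a,j_a^*)$, and your expansion of the seven summands of $j$ does reproduce $j_1,j_2,j_3$ as stated. The only unstated (but standard) ingredient is that $j_a^*\in J(R)$, which holds because the Jacobson radical is invariant under any ring anti-automorphism, in particular under the involution.
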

\begin{proof} It follows from \cite{Mary} that $a^{\dagger}=a^{\|a^*}$ and $a^{\#}=a^{\|a}$, and from \cite{RDD} that $a^{\scriptsize\textcircled{\tiny \#}}=a^{\|(a,a^*)}$. The rest of the proof then follows directly from Theorem \ref{bc3.3}.
\end{proof}

\begin{corollary}\label{bc3.6} Let $a\in R$ and $j_a\in J(R)$. If $a$ is Drazin invertible with ${\rm i}(a)=k$, then the following statements are equivalent:
	\begin{itemize}
		\item [{\rm (1)}] $a+j_a$ is Drazin invertible;
		\item [{\rm (2)}] $(a+j_a)^l$ is regular for some $l\geq k$;
		\item [{\rm (3)}] $(1-aa^D)j_1(1+(a^D)^lj_1)^{-1}(1-a^Da)=0$ for some $l\geq k$, where $j_1=(a+j_a)^l-a^l$.
	\end{itemize}
	In this case, $$(a+j_a)^D=(1+j_1(a^D)^l)a^D(1+ja^D)^{-1}(1+(a^D)^lj_1),$$
	where $j=j_a+aj_1(a^D)^l+(a^D)^lj_1a+j_aj_1(a^D)^l+(a^D)^lj_1j_a+(a^D)^lj_1aj_1(a^D)^l+(a^D)^lj_1j_aj_1(a^D)^l$.

\end{corollary}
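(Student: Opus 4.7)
The plan is to derive Corollary~\ref{bc3.6} by specializing Theorem~\ref{bc3.3} to the case $b=c=a^l$ for some $l\geq k=\mathrm{i}(a)$, and then translating $(b,c)$-invertibility back into Drazin invertibility. First, I would record that $j_1=(a+j_a)^l-a^l$ lies in $J(R)$: an expansion of $(a+j_a)^l$ produces $a^l$ plus a sum of monomials each containing at least one factor of $j_a$, and $J(R)$ is a two-sided ideal of $R$.

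Next, I would use the standard correspondence between the Drazin inverse and the $(a^l,a^l)$-inverse: for $l\geq k$ the element $a$ is $(a^l,a^l)$-invertible with $a^{\|(a^l,a^l)}=a^D$, and $a^l$ is group invertible with group inverse $(a^D)^l$. This designates $b=c=a^l$ and $b^+=c^+=(a^D)^l$ as the natural data to insert into Theorem~\ref{bc3.3}. Because $aa^D$ is idempotent and $l\geq k$, the identities $a^l(a^D)^l=aa^D$ and $(a^D)^l a^l=a^D a$ hold, and these collapse the vanishing condition of Theorem~\ref{bc3.3} into exactly the expression displayed in~(3). Since $b+j_b=c+j_c=(a+j_a)^l$, a direct application of Theorem~\ref{bc3.3} gives the equivalence between the $((a+j_a)^l,(a+j_a)^l)$-invertibility of $a+j_a$, the regularity of $(a+j_a)^l$, and this vanishing condition; the stated formula for $(a+j_a)^D$ and the explicit form of $j$ then follow by substituting $b=c=a^l$, $j_b=j_c=j_1$, $b^+=c^+=(a^D)^l$, and $a^{\|(b,c)}=a^D$ into the formulas of Theorem~\ref{bc3.3}.

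The main obstacle is bridging the $((a+j_a)^l,(a+j_a)^l)$-invertibility of $a+j_a$ with its Drazin invertibility. One direction is immediate: if $(a+j_a)^D$ exists, then for any $l\geq\max\{k,\mathrm{i}(a+j_a)\}$ the element $a+j_a$ is $((a+j_a)^l,(a+j_a)^l)$-invertible and this $(b,c)$-inverse coincides with $(a+j_a)^D$. For the converse, I would invoke the unifying characterization of the Drazin inverse as a $(b,c)$-inverse recalled in the Introduction: whenever $(a+j_a)^{\|((a+j_a)^l,(a+j_a)^l)}$ exists, $a+j_a$ is Drazin invertible and this element equals $(a+j_a)^D$. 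This also ensures that the quantifier ``for some $l\geq k$'' appearing in~(2) and~(3) is compatible with the unqualified formulation in~(1).
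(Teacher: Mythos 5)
Your proposal is correct and follows essentially the same route as the paper: the authors likewise obtain (2)$\Leftrightarrow$(3)$\Rightarrow$(1) by specializing Theorem \ref{bc3.3} to $b=c=a^l$ with $b^+=c^+=(a^D)^l$ (using that $a^{\|a^l}=a^D$ exists for all $l\geq k$), and handle (1)$\Rightarrow$(2) by choosing $l=\max\{{\rm i}(a+j_a),k\}$. Your write-up is in fact somewhat more explicit than the paper's two-line argument, e.g.\ in checking $j_1\in J(R)$ and in collapsing $a^l(a^D)^l$ to $aa^D$.
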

\begin{proof}(1)$\Rightarrow$(2): Assume ${\rm i}(a+j_a)=t$, and take $l=max\{t,k\}$. Then $(a+j_a)^l$ is regular and $l\geq k$.
	
	(2)$\Rightarrow$(3)$\Rightarrow$(1): These follow from Theorem \ref{bc3.3} and the fact that $a^{\|a^l}$ exists for any   $l\geq k$.
\end{proof}

\begin{remark} In the  power series ring $R \llbracket{x}\rrbracket$, researchers investigated  Moore-Penrose inverses of elements (cf.\cite{Huy,Wynn1971}). The above results can be applied to characterize several generalized inverses of elements in $R \llbracket{x}\rrbracket$. We present the result for  the $(b,c)$-inverse.
	
	Let ${\sum\limits_{i\geqslant 0} a_ix^i},{\sum\limits_{i\geqslant 0} b_ix^i},{\sum\limits_{i\geqslant 0} c_ix^i}\in R \llbracket{x}\rrbracket$. The following statements are equivalent:
	\begin{itemize}
		\item [{\rm (1)}] ${\sum\limits_{i\geqslant 0} a_ix^i}$ is $({\sum\limits_{i\geqslant 0} b_ix^i},{\sum\limits_{i\geqslant 0} c_ix^i})$-invertible;
		\item [{\rm (2)}] $a_0^{\|(b_0,c_0)}$ exists with
		$$(1-b_0b_0^+)({\sum\limits_{i\geqslant 1} b_ix^i})(1+b_0^+({\sum\limits_{i\geqslant 1} b_ix^i}))^{-1}(1-b_0^+b_0)=0,$$ $$(1-c_0c_0^+)({\sum\limits_{i\geqslant 1} c_ix^i})(1+c_0^+({\sum\limits_{i\geqslant 1} c_ix^i}))^{-1}(1-c_0^+c_0)=0,$$
		where $b_0^+$ and $c_0^+$ are reflexive inverses of $b_0$ and $c_0$, respectively.
	\end{itemize}
	
\end{remark}

We now turn our attention to the idempotence of the $(b,c)$-inverse.

\begin{lemma} \emph{\cite{Zhuhaiyang2022}} \label{bc3.4} Let $a,b,c\in R$. The following statements are equivalent:
	\begin{itemize}
		\item [{\rm (1)}] $a^{\|(b,c)}$ exists, and is idempotent;
		\item [{\rm (2)}] $cab=cb$ and $1^{\|(b,c)}$ exists.
	\end{itemize}
	In this case, $a^{\|(b,c)}=1^{\|(b,c)}$.
\end{lemma}

In what follows, we explore the idempotence of $a^{\|(b,c)}$ and $(a+j_a)^{\|(b,c)}$.

\begin{corollary}\label{bc3.9}  Let $a,~b,~c\in R$ and $j_a\in J(R)$. If $a^{\|(b,c)}$ exists, then the following statements are equivalent:
	\begin{itemize}
		\item [{\rm (1)}] $(a+j_a)^{\|(b,c)}$ is idempotent;
		\item [{\rm (2)}] $(a^{\|(b,c)})^2=a^{\|(b,c)}(a+j_a)a^{\|(b,c)}$;
		\item [{\rm (3)}] $a+j_a=c^+cb b^++(1-c^+c)(a+j_a)+c^+c(a+j_a)(1-b b^+)$.
	\end{itemize}
\end{corollary}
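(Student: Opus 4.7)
The plan is to reduce everything to the explicit formula from Corollary~\ref{bc3.2}: setting $x := a^{\|(b,c)}$ and $y := (a+j_a)^{\|(b,c)}$, that corollary gives $y = (1+xj_a)^{-1}x$. Writing $u := 1-(a+j_a)$, I will then establish $(1)\Leftrightarrow(2)$ by a direct calculation with $y$ and $(2)\Leftrightarrow(3)$ by ``sandwiching'' between $c,b$ and between $x,x$, using the standard identities $bb^+x = x = xc^+c$, $xab = b$, $cax = c$, and $xax = x$ that follow from the definition of the $(b,c)$-inverse together with $bb^+b = b$ and $cc^+c = c$.

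For $(1)\Leftrightarrow(2)$, I would begin from $x(1+j_ax) = (1+xj_a)x$, which gives the commutation identity $(1+xj_a)^{-1}x = x(1+j_ax)^{-1}$. Squaring yields $y^2 = (1+xj_a)^{-1}x^2(1+j_ax)^{-1}$, so $y^2 = y$ is equivalent, after cancelling the two units, to $x^2 = x(1+j_ax) = x + xj_ax$; since $xax = x$, this in turn is the same as $x^2 = x(a+j_a)x$, which is exactly (2).

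For $(2)\Leftrightarrow(3)$, I would first rewrite (3) in the compact form $c^+c(a+j_a)bb^+ = c^+cbb^+$, obtained by moving $(1-c^+c)(a+j_a)$ to the left-hand side and collecting the $bb^+$ terms; equivalently, $c^+cubb^+ = 0$. Using $xax=x$, (2) becomes $xux = 0$. The implication $(3)\Rightarrow(2)$ is then immediate from
\[
x(c^+cubb^+)x = (xc^+c)\,u\,(bb^+x) = xux,
\]
which vanishes under (3). Conversely, $(2)\Rightarrow(3)$ follows from
\[
ca\,(xux)\,ab = (cax)\,u\,(xab) = cub,
\]
so $xux = 0$ forces $cub = 0$, i.e., $cb = c(a+j_a)b$; left-multiplying by $c^+$ and right-multiplying by $b^+$ returns $c^+cubb^+ = 0$. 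I do not foresee a substantive obstacle; once Corollary~\ref{bc3.2} is available the calculation is essentially bookkeeping, and the only point needing care is choosing which of the four ``frame identities'' $bb^+x=x$, $xc^+c=x$, $xab=b$, $cax=c$ to apply on which side.
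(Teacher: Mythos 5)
Your argument is correct. The paper states this corollary without proof, implicitly leaving it to follow from Corollary~3.4 (the $j_b=j_c=0$ case of Theorem~3.2, giving $(a+j_a)^{\|(b,c)}=(1+xj_a)^{-1}x$ with $x=a^{\|(b,c)}$) and from the idempotence criterion of Zhu--Chen--Zhou quoted just before it, namely that $(a+j_a)^{\|(b,c)}$ is idempotent iff $c(a+j_a)b=cb$ and $1^{\|(b,c)}$ exists. Your route is a self-contained direct verification of the same facts: the reduction of $y^2=y$ to $x^2=x+xj_ax=x(a+j_a)x$ via the commutation $(1+xj_a)^{-1}x=x(1+j_ax)^{-1}$ is exactly right, and your normal form $c^+c\,u\,bb^+=0$ for condition (3), with $u=1-(a+j_a)$, is the correct algebraic simplification; note that $c^+c\,u\,bb^+=0$ is precisely equivalent to $cub=0$, i.e.\ to $c(a+j_a)b=cb$, so your computation in effect reproves the relevant half of the cited result of \cite{Zhuhaiyang2022}. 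The frame identities you invoke ($bb^+x=x$ from $x\in bR$, $xc^+c=x$ from $x\in Rc$, together with $cax=c$, $xab=b$, $xax=x$) are all legitimate, and the regularity of $b$ and $c$ needed for $b^+,c^+$ is guaranteed by the existence of $a^{\|(b,c)}$, as the paper records before Theorem~3.2. What your approach buys is independence from the external reference; what it costs is only a few lines of bookkeeping.
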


\begin{proof} Using Corollary \ref{bc3.2}, we have that $(a+j_a)^{\|(b,c)}$  exists. Furthermore, it follows from Lemma \ref{bc3.4} that $(a+j_a)^{\|(b,c)}$ is idempotent if and only if $c(a+j_a)b=cb$.
	
	(1)$\Leftrightarrow$(2): According to the definition of the $(b,c)$-inverse, there exist $s,t\in R$ such that $a^{\|(b,c)}=sc=bt$. Moreover, $a^{\|(b,c)}ab=b$ and $caa^{\|(b,c)}=c$. Consequently, the equality $c(a+j_a)b=cb$ holds if and only if $(a^{\|(b,c)})^2=a^{\|(b,c)}(a+j_a)a^{\|(b,c)}$.
	
	(1)$\Leftrightarrow$(3): A straightforward computation yields the identity
	$$a+j_a=c^+c(a+j_a)b b^++(1-c^+c)(a+j_a)+c^+c(a+j_a)(1-b b^+).$$ In this decomposition, the equality $c^+c(a+j_a)b b^+=c^+cb b^+$ is equivalent to the equality $c(a+j_a)b=cb$. Hence condition (1) holds if and only if condition (3) holds.
\end{proof}

Given  $b,c\in R$, the product $cb$ is called a trace product if $cbR=cR$ and $Rcb=Rb$. According to a result of Clifford \cite[Proposition 2.3.7]{Howie1995}, this is equivalent to the existence of an idempotent $e$ such that $bR=eR$ and $Rc=Re$. Obviously, $cb$ is  a trace product if and only if $1^{\|(b,c)}$ exists, in which case $1^{\|(b,c)}=e$.

\begin{corollary}\label{bc3.10}  Let $a,~b,~c\in R$ and $j_a\in J(R)$. Then the following statements are equivalent:
	\begin{itemize}
		\item [{\rm (1)}] $a^{\|(b,c)}$ exists, and the elements $a^{\|(b,c)}$,  $(a+j_a)^{\|(b,c)}$  are idempotent;
		\item [{\rm (2)}] $cb$ is a trace product, $a=c^+cb b^++(1-c^+c)a+c^+ca(1-b b^+)$ and $j_a=(1-c^+c)j_a+c^+cj_a(1-b b^+)$.
	\end{itemize}
	In this case, $a^{\|(b,c)}=(a+j_a)^{\|(b,c)}=1^{\|(b,c)}$.
\end{corollary}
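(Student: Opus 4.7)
The plan is to reduce Corollary 3.10 to the characterization of idempotent $(b,c)$-inverses due to Zhu et al.\ (\cite{Zhuhaiyang2022}, cited in the paragraph preceding Corollary 3.9), which asserts that $a^{\|(b,c)}$ exists and is idempotent if and only if $cb$ is a trace product (equivalently, $1^{\|(b,c)}$ exists) and $cab=cb$. Combined with Corollary 3.2 (which says $(a+j_a)^{\|(b,c)}$ exists if and only if $a^{\|(b,c)}$ does), the content of (1) becomes: $cb$ is a trace product, $cab=cb$, and $c(a+j_a)b=cb$. Subtracting the second from the third gives the clean pair of conditions $cab=cb$ and $cj_ab=0$. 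So the whole task becomes matching these two simple identities with the two displayed equations in statement (2).

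Next, I would verify the dictionary between the equations and the simple identities, using only the defining relations $cc^+c=c$ and $bb^+b=b$. For the equation on $a$, left-multiplying $a=c^+cbb^++(1-c^+c)a+c^+ca(1-bb^+)$ by $c$ and right-multiplying by $b$ collapses the first term to $cbb^+b=cb$, kills the middle term, and gives $cab=cb$ up to adding $cb - cb$ terms; conversely, starting from $cab=cb$, one left-multiplies by $c^+$ and right-multiplies by $b^+$ to get $c^+cabb^+=c^+cbb^+$, and the displayed equation follows by splitting $a=c^+ca+(1-c^+c)a$ and then $c^+ca=c^+cabb^++c^+ca(1-bb^+)$. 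The same manipulation shows that the equation on $j_a$ is equivalent to $cj_ab=0$.

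With the dictionary in hand, the equivalence is direct. For (1)$\Rightarrow$(2): idempotence of $a^{\|(b,c)}$ gives (by Zhu's result) that $cb$ is a trace product and $cab=cb$; Corollary 3.2 yields existence of $(a+j_a)^{\|(b,c)}$, and its idempotence gives $c(a+j_a)b=cb$, hence $cj_ab=0$; the two equations in (2) then follow from the dictionary. For (2)$\Rightarrow$(1): the dictionary produces $cab=cb$ and $cj_ab=0$; together with the trace-product hypothesis, Zhu's criterion provides both existence and idempotence of $a^{\|(b,c)}$, and applying it to $a+j_a$ (whose $(b,c)$-inverse exists by Corollary 3.2 and satisfies $c(a+j_a)b=cab+cj_ab=cb$) gives the idempotence of $(a+j_a)^{\|(b,c)}$.

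For the closing identification $a^{\|(b,c)}=(a+j_a)^{\|(b,c)}=1^{\|(b,c)}$, let $e:=1^{\|(b,c)}$, the unique idempotent with $bR=eR$ and $Rc=Re$, so that $eb=b$, $ce=c$, and $e=bu=vc$ for some $u,v\in R$. Using $cab=cb$, right-multiplication by $u$ yields $cae=ce=c$, and left-multiplication of $cab=cb$ by $v$ yields $eab=eb=b$; since $e\in bR\cap Rc$, uniqueness of the $(b,c)$-inverse forces $a^{\|(b,c)}=e$. The same argument applied with $a+j_a$ in place of $a$ (using $c(a+j_a)b=cb$) gives $(a+j_a)^{\|(b,c)}=e$. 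The only subtle step, and the one I expect to require the most care, is the back-and-forth translation in the dictionary; everything else is a direct bookkeeping combination of Corollary 3.2, Corollary 3.9, and the Zhu criterion.
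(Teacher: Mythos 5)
Your proof is correct and follows exactly the route the paper intends: Corollary~\ref{bc3.10} is stated without proof precisely because it is the combination of the Zhu--Chen--Zhou criterion ($a^{\|(b,c)}$ exists and is idempotent iff $cab=cb$ and $1^{\|(b,c)}$ exists), Corollary~\ref{bc3.2}, and the elementary translation between $cab=cb$, $cj_ab=0$ and the two displayed decompositions, which you verify correctly. Your closing identification of $a^{\|(b,c)}=(a+j_a)^{\|(b,c)}=1^{\|(b,c)}$ via uniqueness of the $(b,c)$-inverse is also sound.
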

\begin{proof}(1)$\Rightarrow$(2): Since $a^{\|(b,c)}$ is idempotent, it follows from Lemma \ref{bc3.4} that $cab=cb$ and  $1^{\|(b,c)}$ exists. Thus,  $cb$ is a trace product. Using Corollary \ref{bc3.9}, we have $$a=c^+cb b^++(1-c^+c)a+c^+ca(1-b b^+)$$
	and $$a+j_a=c^+cb b^++(1-c^+c)(a+j_a)+c^+c(a+j_a)(1-b b^+),$$
	which imply  $j_a=(1-c^+c)j_a+c^+cj_a(1-b b^+)$.
	
	(2)$\Rightarrow$(1): The equality $a=c^+cb b^++(1-c^+c)a+c^+ca(1-b b^+)$ implies $cab=cb$. Together with the condition that $cb$ is a trace product,  this yields  that $a^{\|(b,c)}$ exists and is idempotent. Then, by Corollary \ref{bc3.9},  $(a+j_a)^{\|(b,c)}$ is also idempotent.
	
\end{proof}

Recall that an element $a\in R$ is said to be clean if there exists an idempotent $e$ and a unit $u$ such that $a=e+u$. Clean elements are closely related to generalized inverses (cf. \cite{Drazin2012,Mary2020,ZhuPa2018,ZZPa2019}). Furthermore, if $aR\cap eR =\{0\}$, then $a$ is said to be special clean. It was proved in \cite{Mary2020} that $a$ is special clean if and only if $a$ has a reflexive inverse $z$ which is group invertible. The next corollary provides characterizations of special clean property of a sum with radical element.

Suppose that $a$ and $a+j_a$ are regular for some $j_a\in J(R)$. Then the map $\varphi$ induces a bijection between $a\{1,2\}$ and $(a+j_a)\{1,2\}$. Indeed, for arbitrary $x\in a\{1,2\}$, Lemma \ref{bc3.1} implies that $\varphi(x)\in (a+j_a)\{1,2\}$. Similarly,  for arbitrary $y\in (a+j_a)\{1,2\}$, we have $\psi(y)\in a\{1,2\}$. Since $\varphi$ and $\psi$ are mutually inverse mappings between $a\{2\}$ and $(a+j_a)\{2\}$, it follows that  $\varphi$ restricts to a bijection between $a\{1,2\}$ and $(a+j_a)\{1,2\}$. The notation $R^{\#}$ denotes the set of all group invertible elements in $R$.

\begin{corollary} Let $a\in R$ and $j_a\in J(R)$. If $a$ is special clean with a reflexive inverse $a^+$, then the following statements are equivalent:
	\begin{itemize}
		\item [{\rm (1)}] $a+j_a$ is  special clean;
		\item [{\rm (2)}] $a+j_a$ is regular;
		\item [{\rm (3)}] $(1-aa^+)j_a(1+a^+j_a)^{-1}(1-a^+a)=0$.
	\end{itemize}
	In this case, $(1+zj_a)^{-1}z$ is  group invertible for arbitrary $z\in a\{1,2\}\cap R^{\#}$. The map  $\varphi$ induces a bijection between $a\{1,2\}\cap R^{\#}$ and $(a+j_a)\{1,2\}\cap R^{\#}$.
\end{corollary}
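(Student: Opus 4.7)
My plan is to leverage the Mary characterization recalled just above the corollary --- that an element is special clean if and only if it admits a reflexive inverse which is itself group invertible --- and to show that the bijection $\varphi$ preserves group invertibility. Once this single technical fact is in hand, the corollary reduces to a bookkeeping exercise built on Lemma~\ref{bc3.1}.

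I would begin with the easy equivalences. The equivalence $(2)\Leftrightarrow(3)$ is exactly Lemma~\ref{bc3.1}, and $(1)\Rightarrow(2)$ is immediate, since every special clean element is in particular regular. The substance is therefore in $(2)\Rightarrow(1)$: assuming regularity of $a+j_a$, Lemma~\ref{bc3.1} already delivers the candidate reflexive inverse $y := (1+a^{+}j_a)^{-1}a^{+} = \varphi(a^{+})$, so by Mary's theorem it remains only to verify that $y$ is group invertible.

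The crux is then the following claim: if $z\in R$ is group invertible and $j_a\in J(R)$, then $\varphi(z)=(1+zj_a)^{-1}z$ is group invertible. To prove it I would first note that
\[
\varphi(z) - z \;=\; -(1+zj_a)^{-1} z j_a z \;\in\; J(R),
\]
so $\varphi(z) = z + j_z$ with $j_z\in J(R)$. Applying the corollary on persistence of $(b,c)$-invertibility under a radical perturbation (the first corollary following Theorem~\ref{bc3.3}) with $b=c=z$ and radical element $j_z$, existence of $z^{\|(z,z)}=z^{\#}$ is equivalent to existence of $\varphi(z)^{\|(z,z)}$. A direct one-line check, using only that $1+zj_a$ is a unit, yields
\[
\varphi(z)R \;=\; zR \qquad \text{and} \qquad R\varphi(z) \;=\; Rz,
\]
and since the $(b,c)$-inverse depends on $b,c$ only through the one-sided ideals $bR$ and $Rc$ --- a standard fact from Drazin~\cite{Drazin2012} --- this identifies $\varphi(z)^{\|(z,z)}$ with $\varphi(z)^{\|(\varphi(z),\varphi(z))} = \varphi(z)^{\#}$. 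Hence group invertibility of $z$ is equivalent to group invertibility of $\varphi(z)$.

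Everything else then falls out. Taking $z = a^{+}$ shows that $\varphi(a^{+})$ is a group invertible reflexive inverse of $a+j_a$, which settles $(2)\Rightarrow(1)$ via Mary's theorem and establishes the in-case assertion about $(1+a^{+}j_a)^{-1}a^{+}$. For the bijection statement, $\varphi$ is already known to restrict to a bijection $a\{1,2\}\to(a+j_a)\{1,2\}$ whose inverse $\varphi^{-1}(y)=(1-y j_a)^{-1}y$ has exactly the same shape with $-j_a$ in place of $j_a$, so the technical claim applied in both directions immediately yields the restricted bijection between $a\{1,2\}\cap R^{\#}$ and $(a+j_a)\{1,2\}\cap R^{\#}$. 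The principal obstacle I foresee is the ideal-invariance of the $(b,c)$-inverse invoked in the key step; I would prefer to cite it as a known fact rather than re-derive it inline.
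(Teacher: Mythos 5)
The paper states this corollary without proof, so there is nothing to compare against directly; your argument is correct and is the natural derivation from the surrounding results (Mary's characterization of special cleanness, Lemma~\ref{bc3.1}, and the corollary on $(b,c)$-invertibility under a radical perturbation). Your key technical step --- that $\varphi(z)-z=-(1+zj_a)^{-1}zj_az\in J(R)$, so group invertibility of $z$ transfers to $\varphi(z)$ via $\varphi(z)R=zR$, $R\varphi(z)=Rz$ and the ideal-invariance of the $(b,c)$-inverse --- is sound and is exactly the fact the authors implicitly rely on, so no gap remains.
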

\begin{proof} (1)$\Rightarrow$(2) is obvious, and (2)$\Leftrightarrow$(3) directly follows from Lemma \ref{bc3.1}.
	
	(2)$\Rightarrow$(1): Suppose that $a$ has a reflexive inverse $z$ which is group invertible. According to Lemma \ref{bc3.1}, we have $(1+zj_a)^{-1}z\in (a+j_a)\{1,2\}$. Since  $z$  is group invertible, it follows that $(1+zj_a)^{-1}z$ is also group invertible, and its group inverse is given by  $((1+zj_a)^{-1}z)^{\#}=z^{\#}+zz^{\#}j_azz^{\#}$. Consequently, $a+j_a$ admits a reflexive inverse that is group invertible; that is, $a+j_a$ is special clean by \cite{Mary2020}.
	
	In this case,  $z\in a\{1,2\}$  is group invertible if and only if $\varphi(z)$   is group invertible. Hence, $\varphi$ induces a bijection between $a\{1,2\}\cap R^{\#}$ and $(a+j_a)\{1,2\}\cap R^{\#}$.
\end{proof}

Recall that if $a\in R$ has a clean decomposition $a=e+u$ with $ea=ae$, then $a$ is said to be strongly clean, and this decomposition is called a strongly clean decomposition.

\begin{corollary}\label{bc3.8} Let $a\in R$ and $j_a\in J(R)$. Then the following statements are equivalent:
	\begin{itemize}
		\item [{\rm (1)}] $a+j_a$ is  strongly clean;
		\item [{\rm (2)}] $a$ has the clean decomposition $a=\bar{e}+u$ such that $a^{\|e}$ exists with $aa^{\|e}(a+j_a)(1-aa^{\|e})=(1-a^{\|e}a)(a+j_a)a^{\|e}a=0$, where $e+\bar{e}=1$;
		\item [{\rm (3)}] $a$ has the clean decomposition $a=\bar{e}+u$ such that $1^{\|(ueu^{-1},e)}$ exists with $1^{\|(ueu^{-1},e)}(a+j_a)(1-1^{\|(ueu^{-1},e)})=(1-1^{\|(ueu^{-1},e)})u(a+j_a)u^{-1}1^{\|(ueu^{-1},e)}=0$.
	\end{itemize}
	
\end{corollary}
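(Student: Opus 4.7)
The plan is to establish the equivalences via an analysis of the Peirce decomposition of $a$ and $a+j_a$ with respect to the idempotent $\bar{e}$ appearing in the (strongly) clean decomposition, combined with the stability of units under perturbation by elements of $J(R)$ (if $v$ is a unit then so is $v - j_a$, and conversely). A key observation is that the \emph{same} idempotent $\bar{e}$ should serve both as the clean idempotent for $a$ and as the strongly clean idempotent for $a + j_a$, the two decompositions differing only by absorbing $j_a$ into the unit component.

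For (1) $\Rightarrow$ (2): Starting from a strongly clean decomposition $a+j_a = f + v$ with $f^2 = f$, $v$ a unit and $fv = vf$, I set $\bar{e} = f$, $e = 1-f$ and $u = v - j_a$. Since $j_a \in J(R)$ and $v$ is a unit, $u$ is a unit, so $a = \bar{e}+u$ is a clean decomposition. The commutation $fv = vf$ makes $v$ block-diagonal with respect to $\bar{e}$, so $eae = eue = eve - ej_ae$; now $ev = ve$ is a unit in $eRe$ with inverse $ev^{-1}$, and since $ej_ae \in J(eRe)$, the element $eae$ is invertible in $eRe$, which is the standard criterion for $a^{\|e}$ to exist (invertibility along the idempotent $e$). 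The two zero identities in (2) then follow by a direct Peirce computation exploiting $[\bar{e},v] = 0$ and $j_a \in J(R)$.

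For (2) $\Rightarrow$ (1): Given the clean decomposition $a = \bar{e}+u$, the existence of $a^{\|e}$, and the two vanishing conditions, I claim that $\bar{e}$ itself commutes with $a + j_a$; granting this, $a+j_a = \bar{e} + (u + j_a)$ is a strongly clean decomposition, since $u + j_a$ is a unit. To verify $[\bar{e},a+j_a] = 0$ I will compute the Peirce off-diagonals $\bar{e}(a+j_a)e$ and $e(a+j_a)\bar{e}$ relative to $\bar{e}$. Writing $aa^{\|e} = ua^{\|e}$ (via $\bar{e}a^{\|e} = 0$, since $a^{\|e} \in eR$) and observing that $aa^{\|e}$ coincides with $e$ modulo $J(R)$, the first vanishing condition will be shown to force $\bar{e}(a+j_a)e = 0$; a symmetric argument with $a^{\|e}a$ and the second vanishing condition then yields $e(a+j_a)\bar{e} = 0$, giving the desired commutation.

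For (2) $\Leftrightarrow$ (3): A direct check shows that $y := ua^{\|e}$ satisfies the three defining identities of $1^{\|(ueu^{-1},e)}$, namely $ey = e$, $y \cdot ueu^{-1} = ueu^{-1}$ and $y \in ueu^{-1}R \cap Re$ (using $ea = eu$, $a^{\|e}a = a^{\|e}u$ and $\bar{e}a^{\|e} = 0$). Hence $aa^{\|e} = ua^{\|e} = 1^{\|(ueu^{-1},e)}$, making the first vanishing conditions of (2) and (3) literally identical. For the second conditions, the identity $u(1 - a^{\|e}a) = (1 - ua^{\|e})u$ (immediate from $a^{\|e}a = a^{\|e}u$) transports one into the other upon left-multiplying by the unit $u$.

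The principal obstacle lies in (2) $\Rightarrow$ (1): extracting the vanishing of the Peirce off-diagonals $\bar{e}(a+j_a)e$ and $e(a+j_a)\bar{e}$ from conditions that involve the idempotents $aa^{\|e}$ and $a^{\|e}a$ rather than $\bar{e}$ directly. Although $aa^{\|e}$ and $e$ agree modulo $J(R)$, they are not equal in general, so the argument must carefully convert information about the conjugate idempotents into information about $\bar{e}$ while tracking the Jacobson radical perturbations.
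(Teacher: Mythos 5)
Your treatment of (1)$\Rightarrow$(2) and of (2)$\Leftrightarrow$(3) is essentially the paper's: for (1)$\Rightarrow$(2) the paper also sets $u=v-j_a$, computes $(a+j_a)^{\|e}=ev^{-1}$ with $(a+j_a)(a+j_a)^{\|e}=(a+j_a)^{\|e}(a+j_a)=e$, and transports this back to $a^{\|e}$ through the bijection $\varphi$, obtaining the explicit identities $1-aa^{\|e}=\bar e(1+j_aa^{\|e})$ and $aa^{\|e}e=aa^{\|e}$ from which the two vanishing conditions drop out; and your identifications $1^{\|(ueu^{-1},e)}=ua^{\|e}=aa^{\|e}$ and $u^{-1}1^{\|(ueu^{-1},e)}u=a^{\|e}a$ are exactly what the paper uses for (2)$\Leftrightarrow$(3).

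The genuine gap is in (2)$\Rightarrow$(1), and you have located it but not closed it. Your route hinges on the ``observation'' that $aa^{\|e}\equiv e\pmod{J(R)}$. But $e-aa^{\|e}=e\,aa^{\|e}-aa^{\|e}=-\bar e\,a a^{\|e}=-\bar e u a^{\|e}$, and this need not lie in $J(R)$ under the bare hypotheses that $a=\bar e+u$ is clean and $a^{\|e}$ exists: in $M_2(\mathbb R)$ (where $J=0$), taking $e=E_{11}$ and $u$ invertible with $u_{11}\neq 0\neq u_{21}$ gives $aa^{\|e}-e=(u_{21}/u_{11})E_{21}\neq 0$. The congruence does become true once the two vanishing conditions are imposed, but proving it is then essentially the whole content of the implication, so it cannot serve as a starting point without circularity; as stated, your derivation of $\bar e(a+j_a)e=0$ has no engine behind it. The paper's argument sidesteps the idempotent $\bar e$ entirely: it proves that $y=(1+a^{\|e}j_a)^{-1}a^{\|e}$ commutes with $a+j_a$ by reducing the claim to the identity $a^{\|e}(a+j_a)+a^{\|e}aj_aa^{\|e}=(a+j_a)a^{\|e}+a^{\|e}j_aaa^{\|e}$, and then inserting $a^{\|e}=a^{\|e}aa^{\|e}$ so that the first vanishing hypothesis rewrites the left side as $a^{\|e}(a+j_a)aa^{\|e}+a^{\|e}aj_aa^{\|e}$ and the second rewrites the right side as $a^{\|e}a(a+j_a)a^{\|e}+a^{\|e}j_aaa^{\|e}$; these two expressions are equal term by term. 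Commutativity of $y$ with $a+j_a$, together with $y\in eR\cap Re$ and $e(a+j_a)y=y(a+j_a)e=e$, forces $y(a+j_a)=(a+j_a)y=e$, hence $e$ (and so $\bar e$) commutes with $a+j_a$ and $a+j_a=\bar e+(u+j_a)$ is strongly clean. To salvage your plan you would need to prove this commutation (or an equivalent) first, and only then read off the vanishing of the Peirce off-diagonal blocks.
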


\begin{proof}  (1)$\Rightarrow$(2): There exists a strongly clean decomposition $a+j_a=\bar{e}+v$, where $\bar{e}$ is idempotent and $v$ is a unit. Take $u=v-j_a$, thus $a$ has the clean decomposition $a=\bar{e}+u$. It is easy to verify that $(a+j_a)^{\|e}=ev^{-1}$ and $(a+j_a)(a+j_a)^{\|e}=(a+j_a)^{\|e}(a+j_a)=e$, which implies that $a^{\|e}$ exists and $e=(1+j_aa^{\|e})aa^{\|e}(1+j_aa^{\|e})^{-1}=(1+a^{\|e}j_a)^{-1}a^{\|e}a(1+a^{\|e}j_a)$ by Corollary \ref{bc3.2}. Thus, $\bar{e}=(1-aa^{\|e})(1+j_aa^{\|e})^{-1}=(1+a^{\|e}j_a)^{-1}(1-a^{\|e}a)$. Since $1-aa^{\|e}=\bar{e}(1+j_aa^{\|e})$ and $aa^{\|e}e=aa^{\|e}$, we have $aa^{\|e}(a+j_a)(1-aa^{\|e})=aa^{\|e}e(a+j_a)\bar{e}(1+j_aa^{\|e})=0$. Similarly, $(1-a^{\|e}a)(a+j_a)a^{\|e}a=0$.
	
	(2)$\Rightarrow$(1): We claim that $(a+j_a)^{\|e}=(1+a^{\|e}j_a)^{-1}a^{\|e}$ commutes with $a+j_a$. It suffices to prove $a^{\|e}(a+j_a)(1+j_a a^{\|e})=(1+a^{\|e}j_a)(a+j_a)a^{\|e}$, which can be simplified to
	\begin{equation}\label{bc3.5}
		a^{\|e}(a+j_a)+a^{\|e}aj_aa^{\|e}=(a+j_a)a^{\|e}+a^{\|e}j_aaa^{\|e}.
	\end{equation}
	Since $aa^{\|e}(a+j_a)(1-aa^{\|e})=0$,  the left side of (\ref{bc3.5})  equals $a^{\|e}(a+j_a)aa^{\|e}+a^{\|e}aj_aa^{\|e}$. Similarly, the right side of (\ref{bc3.5}) equals $a^{\|e}a(a+j_a)a^{\|e}+a^{\|e}j_aaa^{\|e}$. Thus, the equality (\ref{bc3.5}) holds. This together with $(a+j_a)^{\|e}(a+j_a)R=eR$ and $R(a+j_a)(a+j_a)^{\|e}=Re$  implies $(a+j_a)^{\|e}(a+j_a)=e$. Hence, $a+j_a=\bar{e}+(u+j_a)$ is a strongly clean decomposition.
	
	(2)$\Leftrightarrow$(3): Noting $eae=eue$, we conclude that $a^{\|e}$ exists if and only if $1^{\|(ueu^{-1},e)}$. Furthermore, $1^{\|(ueu^{-1},e)}=aa^{\|e}$ and $u^{-1}1^{\|(ueu^{-1},e)}u=a^{\|e}a$.
\end{proof}

The following example shows that $a$ need not be strongly clean in the   above corollary.

\begin{example} Let $R={\mathbb T}_2(\mathbb{Z})$ be the upper triangular ring. Take $a=\left(\begin{matrix}
		2&2\\
		0&-1
	\end{matrix}
	\right)$ and $j_a=\left(\begin{matrix}
		0&1\\
		0&0
	\end{matrix}
	\right)$. There exists a clean decomposition $a=\left(\begin{matrix}
		1&1\\
		0&0
	\end{matrix}
	\right)+\left(\begin{matrix}
		1&1\\
		0&-1
	\end{matrix}
	\right)$ satisfying Corollary \ref{bc3.8}(2). Thus, $a+j_a$ is strongly clean.
	
	However, it is easy to verify that $a$ is not strongly clean. Suppose, to the contrary, $a=e+u$ is a strongly clean decomposition, where $e=\left(\begin{matrix}
		x_1&x_2\\
		0&x_3
	\end{matrix}
	\right)$ is an idempotent and $u=\left(\begin{matrix}
		2-x_1&2-x_2\\
		0&-1-x_3
	\end{matrix}
	\right)$  is a unit. For $e$ to be idempotent and $u$ to be invertible, the integers  $x_1, x_3$ must be either $0$ or $1$, and the product $(2-x_1)(-1-x_3)$ must equal $1$ or $-1$. These conditions force  $x_1=1$ and $x_3=0$. Moreover, the commutativity condition $eu=ue$  implies that $x_2$ must equal $\dfrac{2}{3}$, which is not an integer. It is a contradiction.
\end{example}

\section{Applications to dual matrices}

Let $\widehat{A}\in \mathbb{D}^{m\times n}$, $\widehat{B}\in \mathbb{D}^{n\times s}$, $\widehat{C}\in \mathbb{D}^{t\times m}$. The dual matrix $\widehat{X}\in \mathbb{D}^{n\times m}$ that satisfies
$$\widehat{C}\widehat{A}\widehat{X}=\widehat{C},~~\widehat{X}\widehat{A}\widehat{B}=\widehat{B},~~\widehat{X}=\widehat{B}\widehat{Y_1}=\widehat{Y}_2\widehat{C} ~\text{for~some~}\widehat{Y}_1\in \mathbb{D}^{s\times m},~\widehat{Y}_2\in \mathbb{D}^{n\times t},$$
is called the $(\widehat{B},\widehat{C})$-inverse of $\widehat{A}$, and $\widehat{A}$ is said to be  $(\widehat{B},\widehat{C})$-invertible. Such a dual matrix $\widehat{X}$ is unique (the proof is similar to the case of ring elements) and denoted by $\widehat{A}^{\|(\widehat{B},\widehat{C})}$. If $\widehat{B}=\widehat{C}$, then $\widehat{A}$ is said to be invertible along $\widehat{B}$, and $\widehat{X}$ is denoted by $\widehat{A}^{\|\widehat{B}}$.

Let $\widehat{A}=A+\varepsilon A_0\in \mathbb{D}^{m\times n}$ and let $A^+$ be a reflexive inverse of $A$. By Lemma \ref{bc3.1},  $\widehat{A}$ is regular if and only if $(I-AA^+)A_0(I-A^+A)=0$. According to \cite{BK1979}, this is equivalent to the existence of matrices $A_1\in \mathbb{C}^{m\times m}$ and $A_2\in \mathbb{C}^{n\times n}$  such that $A_0=A_1A+AA_2$, which  in turn is equivalent to $\widehat{A}=(I+\varepsilon A_1)A(I+\varepsilon A_2)$. We now apply Theorem \ref{bc3.3} to establish the main result of this section.

\begin{proposition}\label{bc2.2}  Let $\widehat{A}=A+\varepsilon A_0\in \mathbb{D}^{m\times n}$, $\widehat{B}=B+\varepsilon B_0\in \mathbb{D}^{n\times s}$, $\widehat{C}=C+\varepsilon C_0\in \mathbb{D}^{t\times m}$. Suppose that $B$ and $C$ have reflexive inverses $B^+$ and $C^+$, respectively. Then the following statements are equivalent:
	\begin{itemize}
		\item [{\rm (1)}] $\widehat{A}^{\|(\widehat{B},\widehat{C})}$ exists;
		\item [{\rm (2)}] $A^{\|(\widehat{B},\widehat{C})}$ exists;
		\item [{\rm (3)}] $A^{\|(B,C)}$ exists with $(I-BB^+)B_0(I-B^+B)=0$, $(I-CC^+)C_0(I-C^+C)=0$.
	\end{itemize}
	In this case,
	\begin{eqnarray*}
		A^{\|(\widehat{B},\widehat{C})}&=&A^{\|(B,C)}\\
		&&+\varepsilon(I-A^{\|(B,C)}A)B_1B^{\|(B,C)}+\varepsilon A^{\|(B,C)}C_2(I-AA^{\|(B,C)}),\\
		\widehat{A}^{\|(\widehat{B},\widehat{C})}&=&A^{\|(B,C)}-\varepsilon A^{\|(B,C)}A_0A^{\|(B,C)}\\
		&&+\varepsilon(I-A^{\|(B,C)}A)B_1B^{\|(B,C)}+\varepsilon A^{\|(B,C)}C_2(I-AA^{\|(B,C)}),
	\end{eqnarray*}
	where $B_0=B_1B+BB_2$, $C_0=C_1C+CC_2$.
\end{proposition}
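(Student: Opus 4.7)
The plan is to reduce Proposition~\ref{bc2.2} to the ring-theoretic Theorem~\ref{bc3.3} and Corollary~\ref{bc3.2}. I would first embed every rectangular dual matrix into $\mathbb{D}^{N\times N}$ for $N\geq\max\{m,n,s,t\}$ via zero-padding, as explained in the Introduction, so that all computations take place in the square ring $\mathbb{D}^{N\times N}$. For any dual matrix $M$, the element $\varepsilon M$ lies in $J(\mathbb{D}^{N\times N})$, because $\varepsilon^{2}=0$ gives $(I-\varepsilon M)(I+\varepsilon M)=I$. Under this identification, the equivalence $(1)\Leftrightarrow(2)$ is an immediate specialization of Corollary~\ref{bc3.2} with $a=A$, $j_a=\varepsilon A_0$, $b=\widehat{B}$, $c=\widehat{C}$.

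For $(3)\Rightarrow(2)$, I apply Theorem~\ref{bc3.3} with $a=A$, $j_a=0$, $b=B$, $c=C$, $j_b=\varepsilon B_0$, $j_c=\varepsilon C_0$. Since $\varepsilon^{2}=0$ truncates the geometric series, $(I+B^{+}\varepsilon B_0)^{-1}=I-\varepsilon B^{+}B_0$, so the condition $(I-BB^{+})j_b(I+B^{+}j_b)^{-1}(I-B^{+}B)=0$ of Theorem~\ref{bc3.3}(3) collapses to $\varepsilon(I-BB^{+})B_0(I-B^{+}B)=0$, which is the $B$-identity of~$(3)$; the $C$-analogue is identical. For $(2)\Rightarrow(3)$, the existence of $A^{\|(\widehat{B},\widehat{C})}$ forces $\widehat{B}$ and $\widehat{C}$ to be regular, so Lemma~\ref{bc3.1} directly supplies the two radical identities of~$(3)$. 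The existence of $A^{\|(B,C)}$ is then recovered by applying Theorem~\ref{bc3.3} in the reverse direction, with $b=\widehat{B}$, $c=\widehat{C}$, $j_b=-\varepsilon B_0$, $j_c=-\varepsilon C_0$, whose radical conditions are automatic from Lemma~\ref{bc3.1} since $B$ and $C$ are regular by hypothesis.

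For the explicit formula of $A^{\|(\widehat{B},\widehat{C})}$, the plan is to substitute $j_a=0$, $j_b=\varepsilon B_0$, $j_c=\varepsilon C_0$ into the expression for $a^{\|(b+j_b,c+j_c)}$ given in Theorem~\ref{bc3.3} and truncate modulo $\varepsilon^{2}$. After expansion, I would write $B_0=B_1B+BB_2$ and $C_0=C_1C+CC_2$ and invoke the four absorption identities
\[
A^{\|(B,C)}AB=B,\quad CAA^{\|(B,C)}=C,\quad BB^{+}A^{\|(B,C)}=A^{\|(B,C)},\quad A^{\|(B,C)}C^{+}C=A^{\|(B,C)},
\]
the first two definitional and the last two coming from $A^{\|(B,C)}\in BR\cap RC$ together with $BB^{+}B=B$, $CC^{+}C=C$. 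The first two kill the $BB_2$- and $C_1C$-summands once the projectors $I-A^{\|(B,C)}A$ and $I-AA^{\|(B,C)}$ are in place, while the last two absorb $B^{+}$ and $C^{+}$ into $A^{\|(B,C)}$, so only the $B_1$- and $C_2$-contributions survive. The formula for $\widehat{A}^{\|(\widehat{B},\widehat{C})}$ then follows from one further $\varepsilon$-expansion of $\widehat{A}^{\|(\widehat{B},\widehat{C})}=(I+\varepsilon A^{\|(\widehat{B},\widehat{C})}A_0)^{-1}A^{\|(\widehat{B},\widehat{C})}$ supplied by Corollary~\ref{bc3.2}. The main obstacle will be the bookkeeping in this $\varepsilon$-expansion: one must group the linear-in-$\varepsilon$ terms carefully so that the required projectors appear and the absorption identities apply cleanly, rather than by brute-force coefficient manipulation; once the right-hand side is conjectured, a direct verification of the three defining identities of the $(\widehat{B},\widehat{C})$-inverse offers a probably shorter alternative.
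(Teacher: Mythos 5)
Your proposal is correct and takes essentially the same route as the paper: the paper also obtains the equivalence of (1)--(3) directly from Theorem \ref{bc3.3} (together with the map $\varphi$ of Corollary \ref{bc3.2}), and derives both formulas by substituting $j_b=\varepsilon B_0$, $j_c=\varepsilon C_0$ into the expression for $a^{\|(b+j_b,c+j_c)}$, truncating via $\varepsilon^2=0$, and applying exactly the absorption identities you list. One small nit: showing $\varepsilon M\in J(\mathbb{D}^{N\times N})$ requires $I+\widehat{X}\varepsilon M$ to be invertible for \emph{every} $\widehat{X}$, not just the single inverse $(I-\varepsilon M)(I+\varepsilon M)=I$ you wrote, but since $\varepsilon$ is central and $\varepsilon\mathbb{D}^{N\times N}$ is a square-zero ideal the same one-line computation closes this.
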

\begin{proof} If the dual matrices are not square, they can be made square   by adding  adequately zero columns or zero rows. Hence, the equivalence among statements (1), (2) and (3) follows directly from Theorem \ref{bc3.3}.  We now derive explicit expressions for $A^{\|(\widehat{B},\widehat{C})}$ and $\widehat{A}^{\|(\widehat{B},\widehat{C})}$. According to the expressions presented in Theorem \ref{bc3.3}, we obtain
	\begin{eqnarray*}
		&&A^{\|(\widehat{B},\widehat{C})}\\
		&=&(I+\varepsilon B_0B^+)(I+A^{\|(B,C)}(\varepsilon AB_0B^++\varepsilon C^+C_0A))^{-1}A^{\|(B,C)}(I+\varepsilon C^+C_0)\\
		&=&(I+\varepsilon B_0B^+)(I-A^{\|(B,C)}(\varepsilon AB_0B^++\varepsilon C^+C_0A))A^{\|(B,C)}(I+\varepsilon C^+C_0)\\
		&=&A^{\|(B,C)}+\varepsilon(I-A^{\|(B,C)}A)B_0B^+A^{\|(B,C)}+\varepsilon A^{\|(B,C)}C^+C_0(I-AA^{\|(B,C)}).
	\end{eqnarray*}
	Substituting $B_0=B_1B+BB_2$, $C_0=C_1C+CC_2$ in the last expression, we further simplify:
	\begin{eqnarray*}
		A^{\|(\widehat{B},\widehat{C})}&=&A^{\|(B,C)}\\
		&&+\varepsilon(I-A^{\|(B,C)}A)B_1B^{\|(B,C)}+\varepsilon A^{\|(B,C)}C_2(I-AA^{\|(B,C)}),\\
		\widehat{A}^{\|(\widehat{B},\widehat{C})}&=&\varphi(A^{\|(\widehat{B},\widehat{C})})=(I-\varepsilon A^{\|(\widehat{B},\widehat{C})}A_0)A^{\|(\widehat{B},\widehat{C})}\\
		&=&A^{\|(B,C)}-\varepsilon A^{\|(B,C)}A_0A^{\|(B,C)}\\
		&&+\varepsilon(I-A^{\|(B,C)}A)B_1B^{\|(B,C)}+\varepsilon A^{\|(B,C)}C_2(I-AA^{\|(B,C)}).
	\end{eqnarray*}
\end{proof}

\begin{remark} In Proposition \ref{bc2.2}(3), for any $B^-\in B\{1\}$, the condition $(I-BB^+)B_0(I-B^+B)=0$ is equivalent to $(I-BB^-)B_0(I-B^-B)=0$. Thus, the reflexive inverses  $B^+$ and $C^+$ can be replaced by arbitrary  $B^-\in B\{1\}$ and $C^-\in C\{1\}$.
	
	Noting that $\mathbb{Q}^{n\times n}$ is isomorphic to the ring $\{\left(\begin{matrix}
		A&A_0\\
		0&A
	\end{matrix}
	\right):~A,A_0\in \mathbb{R}^{n\times n}\}$, one may apply  the method  described in \cite{Mary2012,2016kewangchen} to provide an alternative proof of Proposition \ref{bc2.2}.
\end{remark}

Let $A\in \mathbb{R}^{m\times n}$. For subspaces $E\subset \mathbb{R}^m$ and $F\subset  \mathbb{R}^n$, if there exists $X\in \mathbb{R}^{n\times m}$ such that $$XAX=X,~~\mathcal{R}(X)=E,~~\mathcal{N}(X)=F,$$ then $X$ is unique and  is denoted by $A^{(2)}_{E,F}$ \cite{BG}. Let $B,C$ be real matrices. The existence of $A^{(2)}_{\mathcal{R}(B),\mathcal{N}(C)}$ is equivalent to that of $A^{\|(B,C)}$, since the condition $\mathcal{N}(X)=\mathcal{N}(C)$ is equivalent to $\mathcal{R}(X^{\mathrm T})=\mathcal{R}(C^{\mathrm T})$. In this case, $A^{(2)}_{\mathcal{R}(B),\mathcal{N}(C)}$ coincides with  $A^{\|(B,C)}$. This property extends naturally to dual matrices. For $\widehat{A}\in \mathbb{R}^{m\times n}$, the range and the null space \cite{Zhongjin2023} of $\widehat{A}$ are defined as follows:
\begin{eqnarray*}
	&\mathcal{R}(\widehat{A})&=\{\widehat{A}\hat{\alpha}:~\hat{\alpha}\in \mathbb{D}^{n}\};\\
	&\mathcal{N}(\widehat{A})&=\{\hat{\beta}\in \mathbb{D}^{n}:~\widehat{A}\hat{\beta}=0\}.		
\end{eqnarray*}

\begin{lemma}\label{bc4.5} Let  $\widehat{A}=A+\varepsilon A_0\in \mathbb{D}^{m\times n}$ and $\widehat{B}=B+\varepsilon B_0\in \mathbb{D}^{m\times n}$. If $\widehat{B}$ is regular and $\mathcal{N}(\widehat{A})=\mathcal{N}(\widehat{B})$, then $\widehat{A}$ is regular and $\mathcal{R}(\widehat{A}^{\mathrm T})=\mathcal{R}(\widehat{B}^{\mathrm T})$.
\end{lemma}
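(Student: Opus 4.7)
The plan is to first upgrade the hypothesis to regularity of $\widehat{A}$ by transferring the dual-regularity criterion from $\widehat{B}$ to $\widehat{A}$, and then derive the range identity by transposing a factorization through a $\{1\}$-inverse of $\widehat{B}$.

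First, I would extract from $\mathcal{N}(\widehat{A})=\mathcal{N}(\widehat{B})$ the classical equality $\mathcal{N}(A)=\mathcal{N}(B)$ by evaluating at purely infinitesimal vectors: for $\hat{\beta}=\varepsilon\beta_0$ one has $\widehat{A}\hat{\beta}=\varepsilon A\beta_0$ and $\widehat{B}\hat{\beta}=\varepsilon B\beta_0$, so $\widehat{A}\hat{\beta}=0\iff A\beta_0=0$, and likewise for $\widehat{B}$.

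Next, to show $\widehat{A}$ is regular, I would use the characterization recorded just before Proposition~\ref{bc2.2} (an application of Lemma~\ref{bc3.1}): $A+\varepsilon A_0$ is regular if and only if $(I-AA^+)A_0(I-A^+A)=0$, which amounts to $A_0\beta\in\mathcal{R}(A)$ for every $\beta\in\mathcal{N}(A)$. Fix such a $\beta$. Since $\widehat{B}$ is regular and $\beta\in\mathcal{N}(B)$, the same criterion gives $B_0\beta\in\mathcal{R}(B)$, so we may pick $\beta_0\in\mathbb{R}^n$ with $B\beta_0=-B_0\beta$. Then $\hat{\beta}:=\beta+\varepsilon\beta_0$ satisfies $\widehat{B}\hat{\beta}=0$, hence by hypothesis $\widehat{A}\hat{\beta}=0$. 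Reading off the infinitesimal component yields $A\beta_0+A_0\beta=0$, so $A_0\beta=-A\beta_0\in\mathcal{R}(A)$, as required.

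Once $\widehat{A}$ is known to be regular, the range identity follows by a standard factorization argument. Fix a $\{1\}$-inverse $\widehat{B}^-$ of $\widehat{B}$. For every $\hat{\beta}\in\mathbb{D}^n$, $\widehat{B}(I-\widehat{B}^-\widehat{B})\hat{\beta}=0$, so $(I-\widehat{B}^-\widehat{B})\hat{\beta}\in\mathcal{N}(\widehat{B})=\mathcal{N}(\widehat{A})$, which forces $\widehat{A}(I-\widehat{B}^-\widehat{B})=0$, i.e., $\widehat{A}=\widehat{A}\widehat{B}^-\widehat{B}$. Transposing gives $\widehat{A}^{\rm T}=\widehat{B}^{\rm T}(\widehat{B}^-)^{\rm T}\widehat{A}^{\rm T}$, whence $\mathcal{R}(\widehat{A}^{\rm T})\subseteq\mathcal{R}(\widehat{B}^{\rm T})$. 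The reverse inclusion then follows by the symmetric argument applied with a $\{1\}$-inverse $\widehat{A}^-$ of $\widehat{A}$, yielding $\widehat{B}=\widehat{B}\widehat{A}^-\widehat{A}$ and hence $\mathcal{R}(\widehat{B}^{\rm T})\subseteq\mathcal{R}(\widehat{A}^{\rm T})$.

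I expect the only genuine subtlety to lie in the middle step: one must \emph{construct} the infinitesimal extension $\beta_0$ so that the dual-number hypothesis on $\widehat{A}$ is triggered by a vector in $\mathcal{N}(\widehat{B})$, and this is precisely where regularity of $\widehat{B}$ enters in an essential way. Once that extension is in hand, the passage from $\widehat{A}=\widehat{A}\widehat{B}^-\widehat{B}$ to the range identity is a purely formal transposition argument.
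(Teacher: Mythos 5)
Your proof is correct. At its core it follows the same strategy as the paper: reduce everything to the real-matrix criterion $(I-AA^{+})A_0(I-A^{+}A)=0$ from Lemma~\ref{bc3.1} and verify it for $\widehat{A}$ by transporting information from $\widehat{B}$ through the common null space. The mechanics differ, though. The paper works globally: it takes an idempotent $\widehat{E}$ with $\mathcal{N}(\widehat{B})=\mathcal{R}(I-\widehat{E})$, deduces $\widehat{A}(I-\widehat{E})=0$, extracts $(I-AA^-)A_0(I-E)=0$ and $(I-E)(I-A^-A)=I-A^-A$, and multiplies these identities together. You work pointwise: for $\beta\in\mathcal{N}(A)=\mathcal{N}(B)$ you explicitly construct the infinitesimal lift $\beta_0$ with $B\beta_0=-B_0\beta$ so that $\beta+\varepsilon\beta_0\in\mathcal{N}(\widehat{B})=\mathcal{N}(\widehat{A})$, and read off $A_0\beta\in\mathcal{R}(A)$. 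The two are equivalent (your lift is what the column $(I-\widehat{E})$ encodes), but your version is more elementary and makes visible exactly where regularity of $\widehat{B}$ is used. A genuine plus of your write-up is the last step: the paper asserts $\mathcal{R}(\widehat{A}^{\rm T})=\mathcal{R}(\widehat{B}^{\rm T})$ without argument, whereas you supply the factorizations $\widehat{A}=\widehat{A}\widehat{B}^-\widehat{B}$ and $\widehat{B}=\widehat{B}\widehat{A}^-\widehat{A}$ and transpose, which is exactly the missing justification.
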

\begin{proof} Since $\widehat{B}$ is regular, there exists an idempotent dual matrix $\widehat{E}=E+\varepsilon E_0$ such that $\mathcal{N}(\widehat{B})=\mathcal{N}(\widehat{E})=\mathcal{R}(I-\widehat{E})$. It follows that $\widehat{A}(I-\widehat{E})=0$. Multiplying by $I-AA^-$ on the left (where $A^-\in A\{1\}$), we obtain $(I-AA^-)A_0(I-E)=0$. For any $\alpha\in \mathbb{R}^n$, we have $\widehat{A}(\varepsilon(I-A^-A)\alpha)=0$ and $\widehat{E}(\varepsilon(I-A^-A)\alpha)=0$, which implies $(I-E)(I-A^-A)=I-A^-A$. This together with $(I-AA^-)A_0(I-E)=0$ implies $(I-AA^-)A_0(I-A^-A)=(I-AA^-)A_0(I-E)(I-A^-A)=0$. Therefore, $\widehat{A}$ is regular and $\mathcal{R}(\widehat{A}^{\mathrm T})=\mathcal{R}(\widehat{B}^{\mathrm T})$.
\end{proof}

Let $\widehat{A}\in \mathbb{D}^{m\times n}$, $\widehat{B}\in \mathbb{D}^{n\times s}$, $\widehat{C}\in \mathbb{D}^{t\times m}$. If there exists $\widehat{X}\in \mathbb{D}^{n\times m}$ such that
$$\widehat{X}\widehat{A}\widehat{X}=\widehat{X},~~\mathcal{R}(\widehat{X})=\mathcal{R}(\widehat{B}),~~\mathcal{N}(\widehat{X})=\mathcal{N}(\widehat{C}),$$ then $\widehat{X}$ is unique. In fact, suppose that $\widehat{X}_1$ and $\widehat{X}_2$ both satisfy above equations. By Lemma \ref{bc4.5}, it follows $\mathcal{R}(\widehat{X}_1)=\mathcal{R}(\widehat{X}_2)$ and $\mathcal{R}(\widehat{X}_1^{\mathrm T})=\mathcal{R}(\widehat{X}_2^{\mathrm T})$. The relation $\mathcal{R}(\widehat{X}_1)=\mathcal{R}(\widehat{X}_2)$ implies $\widehat{X}_1\widehat{A}\widehat{X}_2=\widehat{X}_2$, while $\mathcal{R}(\widehat{X}_1^{\mathrm T})=\mathcal{R}(\widehat{X}_2^{\mathrm T})$ implies $\widehat{X}_1\widehat{A}\widehat{X}_2=\widehat{X}_1$. Thus, $\widehat{X}_1=\widehat{X}_2$. We denote $\widehat{X}$ by the symbol $\widehat{A}^{(2)}_{\mathcal{R}(\widehat{B}),\mathcal{N}(\widehat{C})}$.

Using Lemma \ref{bc4.5}, we immediately deduce the equivalence between the existence of  $A^{(2)}_{\mathcal{R}(\widehat{B}),\mathcal{N}(\widehat{C})}$ and that of $\widehat{A}^{\|(\widehat{B},\widehat{C})}$.

\begin{proposition}  Let $\widehat{A}\in \mathbb{D}^{m\times n}$, $\widehat{B}\in \mathbb{D}^{n\times s}$, $\widehat{C}\in \mathbb{D}^{t\times m}$. Then the following statements are equivalent:
	\begin{itemize}
		\item [{\rm (1)}] $\widehat{A}^{\|(\widehat{B},\widehat{C})}$ exists;
		\item [{\rm (2)}] $\widehat{A}^{(2)}_{\mathcal{R}(\widehat{B}),\mathcal{N}(\widehat{C})}$ exists.
	\end{itemize}
	In this case, $\widehat{A}^{(2)}_{\mathcal{R}(\widehat{B}),\mathcal{N}(\widehat{C})}=\widehat{A}^{\|(\widehat{B},\widehat{C})}$.
\end{proposition}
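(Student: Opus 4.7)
The plan is to prove the two implications by exhibiting, in each direction, a dual matrix that simultaneously satisfies both defining sets of conditions, and then to invoke the uniqueness clauses (which are already in hand: uniqueness of $\widehat{A}^{(2)}_{\mathcal{R}(\widehat{B}),\mathcal{N}(\widehat{C})}$ was just established before the proposition, and uniqueness of $\widehat{A}^{\|(\widehat{B},\widehat{C})}$ is part of its definition). The crucial bridge between the two characterizations will be Lemma \ref{bc4.5}, which converts the null-space equality $\mathcal{N}(\widehat{X})=\mathcal{N}(\widehat{C})$ into the transpose-range equality $\mathcal{R}(\widehat{X}^{\rm T})=\mathcal{R}(\widehat{C}^{\rm T})$ whenever $\widehat{X}$ is regular; this is what will allow me to pass between ``range-plus-null-space'' data and ``left-factor-plus-right-factor'' data.

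For (1) $\Rightarrow$ (2), I would set $\widehat{X}=\widehat{A}^{\|(\widehat{B},\widehat{C})}$ and read off the geometric conditions directly from the definition. The factorization $\widehat{X}=\widehat{B}\widehat{Y}_1$ gives $\mathcal{R}(\widehat{X})\subseteq \mathcal{R}(\widehat{B})$, while $\widehat{X}\widehat{A}\widehat{B}=\widehat{B}$ writes every column of $\widehat{B}$ as an image of $\widehat{X}$, yielding the reverse inclusion. Dually, $\widehat{X}=\widehat{Y}_2\widehat{C}$ together with $\widehat{C}\widehat{A}\widehat{X}=\widehat{C}$ combine to give $\mathcal{N}(\widehat{X})=\mathcal{N}(\widehat{C})$, and the $\{2\}$-identity $\widehat{X}\widehat{A}\widehat{X}=\widehat{X}$ drops out by substituting $\widehat{X}=\widehat{B}\widehat{Y}_1$ and applying $\widehat{X}\widehat{A}\widehat{B}=\widehat{B}$.

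For (2) $\Rightarrow$ (1), I would take $\widehat{X}=\widehat{A}^{(2)}_{\mathcal{R}(\widehat{B}),\mathcal{N}(\widehat{C})}$, note that $\widehat{X}\widehat{A}\widehat{X}=\widehat{X}$ exhibits $\widehat{A}$ as a $\{1\}$-inverse of $\widehat{X}$ and hence $\widehat{X}$ is regular, and then invoke Lemma \ref{bc4.5} to upgrade $\mathcal{N}(\widehat{X})=\mathcal{N}(\widehat{C})$ to $\mathcal{R}(\widehat{X}^{\rm T})=\mathcal{R}(\widehat{C}^{\rm T})$. The two range equalities will supply factorizations $\widehat{X}=\widehat{B}\widehat{Y}_1$ and $\widehat{X}=\widehat{Y}_2\widehat{C}$, and also $\widehat{C}=\widehat{Z}\widehat{X}$ for some dual matrix $\widehat{Z}$. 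The remaining identities then become routine: $\widehat{X}\widehat{A}\widehat{B}=\widehat{B}$ because every column of $\widehat{B}$ lies in $\mathcal{R}(\widehat{X})$ and $\widehat{X}\widehat{A}\widehat{X}=\widehat{X}$; and $\widehat{C}\widehat{A}\widehat{X}=\widehat{Z}\widehat{X}\widehat{A}\widehat{X}=\widehat{Z}\widehat{X}=\widehat{C}$.

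I expect the main obstacle to sit in this converse direction, specifically in the step that converts a null-space condition into a left-factorization through $\widehat{X}$. Over the dual number ring the rank theory is delicate, so this is not a purely linear-algebraic fact available from inspection; it is exactly the content of Lemma \ref{bc4.5} and genuinely relies on $\widehat{X}$ being regular. Once both implications are established, the uniqueness assertions force the equality $\widehat{A}^{(2)}_{\mathcal{R}(\widehat{B}),\mathcal{N}(\widehat{C})}=\widehat{A}^{\|(\widehat{B},\widehat{C})}$, concluding the proof.
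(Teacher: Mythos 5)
Your proof is correct and follows exactly the route the paper intends: the paper states the proposition as an immediate consequence of Lemma \ref{bc4.5}, and your argument fills in the routine verifications (reading the range/null-space data off the defining equations in one direction, and using Lemma \ref{bc4.5} plus the column-factorization of range inclusions in the other) together with the uniqueness of both inverses. You have also correctly identified that the only non-elementary step is the regularity-dependent passage from $\mathcal{N}(\widehat{X})=\mathcal{N}(\widehat{C})$ to $\mathcal{R}(\widehat{X}^{\rm T})=\mathcal{R}(\widehat{C}^{\rm T})$, which is precisely the content of Lemma \ref{bc4.5}.
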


Let $A\in \mathbb{R}^{n\times n}$ and $A=GH$ be the full rank decomposition. A well-known result states that $A$ is group invertible if and only if $HG$ is invertible, in which case $A^{\#}=G(HG)^{-2}H$. Besides, the Moore-Penrose inverse can be expressed as $A^{\dagger}=H^{\mathrm T}(H H^{\mathrm T})^{-1}(G^{\mathrm T}G)^{-1}G^{\mathrm T}.$ Now, we extend these results  to dual matrices.

\begin{proposition}\label{bc4.6} Let $\widehat{A}=A+\varepsilon A_0\in \mathbb{D}^{m\times n}$, $\widehat{D}=D+\varepsilon D_0\in \mathbb{D}^{n\times m}$, and $D=GH$ be the full rank decomposition. The following statements are equivalent:
	\begin{itemize}
		\item [{\rm (1)}] $\widehat{A}^{\|\widehat{D}}$ exists;
		\item [{\rm (2)}] $A^{\|D}$ exists with $(1-DD^-)D_0(1-D^-D)=0$ for some $D^-\in D\{1\}$;
		\item [{\rm (3)}] $\widehat{D}=(I+\varepsilon D_1)D(I+\varepsilon D_2)$ for some real matrices $D_1,D_2$, and $\widehat{Z}= H(I+\varepsilon D_2)\widehat{A}(I+\varepsilon D_1)G$  is invertible.
	\end{itemize}
	In this case,
	$\widehat{A}^{\|\widehat{D}}=(I+\varepsilon D_1)G\widehat{Z}^{-1}H(I+\varepsilon D_2).$
\end{proposition}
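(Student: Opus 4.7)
The plan is to apply Proposition \ref{bc2.2} with $\widehat{B} = \widehat{C} = \widehat{D}$, which immediately handles (1)$\Leftrightarrow$(2). To connect with (3), I would use the full rank decomposition $D = GH$ to reformulate invertibility along $D$ as an ordinary invertibility condition for a smaller square matrix, and then lift this to the dual setting via a Jacobson radical argument.

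Concretely, I would proceed in three steps. First, by the discussion preceding Proposition \ref{bc2.2} (together with the subsequent Remark), the condition $(I-DD^-)D_0(I-D^-D)=0$ for some (equivalently, any) $D^- \in D\{1\}$ is equivalent to the existence of real matrices $D_1, D_2$ with $D_0 = D_1 D + D D_2$, i.e.\ $\widehat{D} = (I + \varepsilon D_1) D (I + \varepsilon D_2)$. Second, I would invoke the classical characterization: since $D = GH$ is a full rank decomposition, $A^{\|D}$ exists if and only if the square matrix $Z := HAG$ is invertible; this follows for instance from the rank comparison $\operatorname{rank}(DAD) = \operatorname{rank}(HAG)$, or directly by checking that $A^{\|D} = G Z^{-1} H$ satisfies the defining identities of the inverse along $D$. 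Third, writing $\widehat{Z} = Z + \varepsilon Z_0$ in $\mathbb{D}^{r \times r}$ with $r = \operatorname{rank}(D)$, the element $\varepsilon Z_0$ lies in the Jacobson radical of $\mathbb{D}^{r \times r}$, so $\widehat{Z}$ is invertible in $\mathbb{D}^{r \times r}$ if and only if $Z$ is invertible in $\mathbb{R}^{r \times r}$. Chaining these three equivalences yields (2)$\Leftrightarrow$(3).

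For the explicit formula, I would avoid routing through the somewhat heavy expression provided by Proposition \ref{bc2.2} and instead verify the candidate $\widehat{X} := (I + \varepsilon D_1)\, G\, \widehat{Z}^{-1}\, H\, (I + \varepsilon D_2)$ directly. Substituting $\widehat{D} = (I + \varepsilon D_1)\, G\, H\, (I + \varepsilon D_2)$ and using the definition of $\widehat{Z}$, both identities $\widehat{D}\widehat{A}\widehat{X} = \widehat{D}$ and $\widehat{X}\widehat{A}\widehat{D} = \widehat{D}$ collapse through $\widehat{Z}\widehat{Z}^{-1} = \widehat{Z}^{-1}\widehat{Z} = I$. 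For the membership $\widehat{X} \in \widehat{D}\, \mathbb{D}^{m \times m} \cap \mathbb{D}^{n \times n}\, \widehat{D}$, I would observe that $I \pm \varepsilon D_i$ are mutual inverses, and that $G$ admits a left inverse and $H$ a right inverse over $\mathbb{R}$; hence $(I + \varepsilon D_1)G$ has a left inverse and $H(I + \varepsilon D_2)$ has a right inverse over $\mathbb{D}$, which lets me factor $\widehat{X}$ through $\widehat{D}$ on either side.

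The main obstacle is the step linking the real-matrix condition ``$A^{\|D}$ exists'' to the dual-matrix condition ``$\widehat{Z}$ is invertible''. Once the classical equivalence with invertibility of $HAG$ is in place, the translation to the dual setting is transparent via the Jacobson radical observation, and the remainder reduces to careful but essentially mechanical bookkeeping using the unit factors $I + \varepsilon D_1$ and $I + \varepsilon D_2$.
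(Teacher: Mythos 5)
Your proposal is correct, and it handles part of the argument differently from the paper. For (1)$\Leftrightarrow$(2) you do exactly what the paper does: apply Proposition \ref{bc2.2} with $\widehat{B}=\widehat{C}=\widehat{D}$. For the link with (3) and for the closed-form expression, however, the paper simply cites \cite[Theorem 4.2]{Zhouyukun2024}, whereas you give a self-contained chain: the regularity condition $(I-DD^-)D_0(I-D^-D)=0$ is equivalent to the factorization $\widehat{D}=(I+\varepsilon D_1)D(I+\varepsilon D_2)$ (this is exactly the observation made in the paper just before Proposition \ref{bc2.2}); the existence of $A^{\|D}$ is equivalent to invertibility of $Z=HAG$ via the full rank decomposition; and since $\widehat{Z}-Z\in\varepsilon\,\mathbb{R}^{r\times r}\subseteq J(\mathbb{D}^{r\times r})$, invertibility of $\widehat{Z}$ is equivalent to that of $Z$ (and is independent of the choice of $D_1,D_2$, since the real part of $\widehat{Z}$ is always $HAG$). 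Your direct verification of the candidate $(I+\varepsilon D_1)G\widehat{Z}^{-1}H(I+\varepsilon D_2)$ is also sound: the two product identities collapse through $\widehat{Z}\widehat{Z}^{-1}=\widehat{Z}^{-1}\widehat{Z}=I$, and the membership in $\widehat{D}\,\mathbb{D}^{m\times m}\cap\mathbb{D}^{n\times n}\widehat{D}$ follows from the one-sided invertibility of $G$ and $H$ together with the invertibility of $I+\varepsilon D_i$. What your route buys is independence from the external reference and a transparent explanation of where $\widehat{Z}$ comes from; what the paper's route buys is brevity, since the cited theorem already packages the equivalence of (1) and (3) together with the formula.
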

\begin{proof} The equivalence between (1) and (2)  follows from Proposition \ref{bc2.2}.
	
	The equivalence between (1) and (3)  follows from \cite[Theorem 4.2]{Zhouyukun2024}. We now provide  a direct proof. Suppose that the rank of $D$ is $r$. There exist $S\in \mathbb{R}^{r\times n},~T\in \mathbb{R}^{m\times r}$ such that $SG=HT=I$.
	
	(1)$\Rightarrow$(3): It is clear from the discussion at the beginning of Section 4 that $\widehat{D}$ can be expressed as $\widehat{D} = (I + \varepsilon D_1) D (I + \varepsilon D_2)$ for some real matrices $D_1$ and $D_2$.  Moreover, there exists $\widehat{L} \in \mathbb{D}^{m \times m}$ such that $\widehat{A}^{\|\widehat{D}} = \widehat{D} \widehat{L}$. We claim that $\widehat{Z}$ is invertible and that its inverse is given by $\widehat{Z}^{-1} = S(I - \varepsilon D_1) \widehat{A}^{\|\widehat{D}} (I - \varepsilon D_2) T$. Noting $S(I-\varepsilon D_1)(I+\varepsilon D_1)G=I$, we have
	\begin{eqnarray*}
		&&\widehat{Z}S(I-\varepsilon D_1)\widehat{A}^{\|\widehat{D}}(I-\varepsilon D_2)T\\
		&=&\widehat{Z} S(I-\varepsilon D_1)\widehat{D} \widehat{L}(I-\varepsilon D_2)T\\
		&=&\widehat{Z} S(I-\varepsilon D_1)(I+\varepsilon D_1)GH(I+\varepsilon D_2)\widehat{L}(I-\varepsilon D_2)T\\
		&=&\widehat{Z}H(I+\varepsilon D_2)\widehat{L}(I-\varepsilon D_2)T\\
		&=&H(I+\varepsilon D_2)\widehat{A}(I+\varepsilon D_1)GH(I+\varepsilon D_2)\widehat{L}(I-\varepsilon D_2)T\\
		&=&H(I+\varepsilon D_2)\widehat{A}\widehat{D}\widehat{L}(I-\varepsilon D_2)T\\
		&=&H(I+\varepsilon D_2)\widehat{A}\widehat{A}^{\|\widehat{D}}(I-\varepsilon D_2)T\\
		&=&S(I-\varepsilon D_1)(I+\varepsilon D_1)GH(I+\varepsilon D_2)\widehat{A}\widehat{A}^{\|\widehat{D}}(I-\varepsilon D_2)T\\
		&=&S(I-\varepsilon D_1)\widehat{D}\widehat{A}\widehat{A}^{\|\widehat{D}}(I-\varepsilon D_2)T\\
		&=&S(I-\varepsilon D_1)\widehat{D}(I-\varepsilon D_2)T\\
		&=&S(I-\varepsilon D_1)(I+\varepsilon D_1)GH(I+\varepsilon D_2)(I-\varepsilon D_2)T=I.
	\end{eqnarray*}
	Analogously, we have $S(I-\varepsilon D_1)\widehat{A}^{\|\widehat{D}}(I-\varepsilon D_2)T\widehat{Z}=I$, which completes the proof.
	
	(3)$\Rightarrow$(1): Take  $\widehat{X}=(I+\varepsilon D_1)G\widehat{Z}^{-1}H(I+\varepsilon D_2).$ Since $H(I+\varepsilon D_2)(I-\varepsilon D_2)T=I$, we have
	\begin{eqnarray*}
		\widehat{X}&=&(I+\varepsilon D_1)G\widehat{Z}^{-1}H(I+\varepsilon D_2)\\
		&=&(I+\varepsilon D_1)G(H(I+\varepsilon D_2)(I-\varepsilon D_2)T)\widehat{Z}^{-1}H(I+\varepsilon D_2)\\
		&=&\widehat{D}(I-\varepsilon D_2)T\widehat{Z}^{-1}H(I+\varepsilon D_2).
	\end{eqnarray*} That is, there exists $\widehat{Y_1}\in \mathbb{D}^{m\times m}$ such that $\widehat{X}=\widehat{D}\widehat{Y_1}$. Then
	\begin{eqnarray*}
		\widehat{D}\widehat{A}\widehat{X}&=&(I+\varepsilon D_1)GH(I+\varepsilon D_2)\widehat{A}(I+\varepsilon D_1)G\widehat{Z}^{-1}H(I+\varepsilon D_2)\\
		&=&(I+\varepsilon D_1)G\widehat{Z}\widehat{Z}^{-1}H(I+\varepsilon D_2)=\widehat{D}.
	\end{eqnarray*}
	A similar argument shows that $\widehat{D}=\widehat{X}\widehat{A}\widehat{D}$ and that $\widehat{X}=\widehat{Y_2}\widehat{D}$ for some $\widehat{Y_2}\in \mathbb{D}^{n\times n}$. Hence, $\widehat{A}^{\|\widehat{D}}$ exists.
\end{proof}

Taking $\widehat{D}=\widehat{A}^{\mathrm T}$ in Proposition \ref{bc4.6}, we obtain the next result.

\begin{corollary} Let $\widehat{A}=A+\varepsilon A_0\in \mathbb{D}^{m\times n}$ and $A=GH$ be the full rank decomposition. If $\widehat{A}=(I+\varepsilon A_1)A(I+\varepsilon A_2)$ for some real matrices $A_1,A_2$, then
	\begin{eqnarray*}
		\widehat{A}^{\dagger}=&&(I+\varepsilon A^{\mathrm T}_2)H^{\mathrm T}(H H^{\mathrm T}+\varepsilon HA_2H^{\mathrm T}+\varepsilon HA_2^{\mathrm T}H^{\mathrm T})^{-1} \bullet \\
		&&(G^{\mathrm T}G +\varepsilon G^{\mathrm T}A_1^{\mathrm T}G+\varepsilon G^{\mathrm T}A_1G)^{-1}G^{\mathrm T}(I+\varepsilon A^{\mathrm T}_1).
	\end{eqnarray*}
\end{corollary}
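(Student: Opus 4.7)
The plan is to identify $\widehat{A}^\dagger$ with the $(\widehat{A}^{\mathrm T},\widehat{A}^{\mathrm T})$-inverse and then apply the preceding proposition (the one characterizing $\widehat{A}^{\|\widehat{D}}$ via full rank decomposition) with $\widehat{D}=\widehat{A}^{\mathrm T}$. To do this I first compute the transpose explicitly: from $\widehat{A}=(I+\varepsilon A_1)A(I+\varepsilon A_2)$ one immediately gets
$$\widehat{A}^{\mathrm T}=(I+\varepsilon A_2^{\mathrm T})A^{\mathrm T}(I+\varepsilon A_1^{\mathrm T}),$$
which already exhibits $\widehat{A}^{\mathrm T}$ in the ``special'' form required by the hypothesis of the previous proposition. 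Taking the full rank decomposition of $A^{\mathrm T}$ as $A^{\mathrm T}=H^{\mathrm T}G^{\mathrm T}$, I can apply that proposition with the substitutions $D\mapsto A^{\mathrm T}$, $G\mapsto H^{\mathrm T}$, $H\mapsto G^{\mathrm T}$, $D_1\mapsto A_2^{\mathrm T}$, $D_2\mapsto A_1^{\mathrm T}$.

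Next, I would compute the matrix $\widehat{Z}$ that appears there. Using $A=GH$ and $\varepsilon^2=0$,
$$\widehat{Z}=G^{\mathrm T}(I+\varepsilon A_1^{\mathrm T})\widehat{A}(I+\varepsilon A_2^{\mathrm T})H^{\mathrm T}=G^{\mathrm T}\bigl(I+\varepsilon(A_1+A_1^{\mathrm T})\bigr)GH\bigl(I+\varepsilon(A_2+A_2^{\mathrm T})\bigr)H^{\mathrm T},$$
which factors as a product of two invertible dual matrices
$$\widehat{Z}=\bigl(G^{\mathrm T}G+\varepsilon G^{\mathrm T}(A_1+A_1^{\mathrm T})G\bigr)\bigl(HH^{\mathrm T}+\varepsilon H(A_2+A_2^{\mathrm T})H^{\mathrm T}\bigr).$$
Inverting in reverse order gives $\widehat{Z}^{-1}$ as the product of the two inverses written in the stated form.

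Finally, plugging $\widehat{Z}^{-1}$ into the formula $\widehat{A}^{\|\widehat{A}^{\mathrm T}}=(I+\varepsilon A_2^{\mathrm T})H^{\mathrm T}\widehat{Z}^{-1}G^{\mathrm T}(I+\varepsilon A_1^{\mathrm T})$ furnished by the previous proposition yields precisely the asserted expression for $\widehat{A}^\dagger$. Existence is automatic, since $A^\dagger=A^{\|(A^{\mathrm T},A^{\mathrm T})}$ always exists for a real matrix and the special form of $\widehat{A}^{\mathrm T}$ encodes exactly the radical condition needed in Proposition~\ref{bc2.2}.

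The only slightly delicate point I anticipate is the bookkeeping around transposes: one must match the conventions of the previous proposition (where the decomposition is written as $(I+\varepsilon D_1)D(I+\varepsilon D_2)$) with the naturally occurring order $(I+\varepsilon A_2^{\mathrm T})A^{\mathrm T}(I+\varepsilon A_1^{\mathrm T})$, taking care that the roles of the two perturbation matrices swap when transposing. Once the substitution dictionary is fixed, the calculation reduces to a straightforward first-order expansion in $\varepsilon$.
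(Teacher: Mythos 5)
Your proposal is correct and follows the route the paper intends: the corollary is a direct specialization of the preceding proposition on $\widehat{A}^{\|\widehat{D}}$ with $\widehat{D}=\widehat{A}^{\rm T}=(I+\varepsilon A_2^{\rm T})A^{\rm T}(I+\varepsilon A_1^{\rm T})$ and the full rank decomposition $A^{\rm T}=H^{\rm T}G^{\rm T}$, and your factorization of $\widehat{Z}$ into the two invertible factors $G^{\rm T}G+\varepsilon G^{\rm T}(A_1+A_1^{\rm T})G$ and $HH^{\rm T}+\varepsilon H(A_2+A_2^{\rm T})H^{\rm T}$ correctly handles both existence and the stated expression. The transpose bookkeeping you flag is exactly the only delicate point, and you have resolved it correctly.
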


Taking $\widehat{D}=\widehat{A}$ in Proposition \ref{bc4.6} yields  the following result. The equivalence of conditions (1), (2) and (3) in this result was also established in \cite{WangGao2023,Zhongjin2022}.

\begin{corollary} Let $\widehat{A}=A+\varepsilon A_0\in \mathbb{D}^{n\times n}$ and $A=GH$ be the full rank decomposition. The following statements are equivalent:
	\begin{itemize}
		\item [{\rm (1)}] $\widehat{A}^{\#}$ exists;
		\item [{\rm (2)}] $\widehat{A}^{\scriptsize\textcircled{\tiny \#}}$ exists;
		\item [{\rm (3)}] $A^{\#}$ exists with $(1-AA^-)A_0(1-A^-A)=0$ for some $A^-\in A\{1\}$;
		\item [{\rm (4)}] $\widehat{A}=(I+\varepsilon A_1)A(I+\varepsilon A_2)$ for some real matrices $A_1,A_2$, and $\widehat{Z}= H(I+\varepsilon A_2)(I+\varepsilon A_1)G$  is invertible.
	\end{itemize}
	In this case,
	$\widehat{A}^{\#}=(I+\varepsilon A_1)G\widehat{Z}^{-2}H(I+\varepsilon A_2)$,\\
	$\widehat{A}^{\scriptsize\textcircled{\tiny \#}}=(I+\varepsilon A_1)G\widehat{Z}^{-1}(G^{\mathrm T}G +\varepsilon G^{\mathrm T}A_1^{\mathrm T}G+\varepsilon G^{\mathrm T}A_1G)^{-1}G^{\mathrm T}(I+\varepsilon A^{\mathrm T}_1).$
\end{corollary}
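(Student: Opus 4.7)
The plan is to reduce the entire statement to earlier results in this article: the preceding proposition (on $\widehat{A}^{\|\widehat{D}}$) for the chain involving $\widehat{A}^{\#}$, and Proposition \ref{bc2.2} applied to the pair $(\widehat{A},\widehat{A}^{\rm T})$ for the core-inverse equivalence. Both closed forms will then be obtained through a ``lifted'' full rank factorization $\widehat{A}=\widehat{G}\widehat{H}$ with $\widehat{G}:=(I+\varepsilon A_1)G$ and $\widehat{H}:=H(I+\varepsilon A_2)$, which satisfies $\widehat{H}\widehat{G}=\widehat{Z}$ by construction.

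For the chain $(1)\Leftrightarrow(3)\Leftrightarrow(4)$ and the formula for $\widehat{A}^{\#}$, I would use the identification $\widehat{A}^{\#}=\widehat{A}^{\|\widehat{A}}$ and apply the preceding proposition with $\widehat{D}:=\widehat{A}$ and $D=A$. Parts (2) and (3) of that proposition directly yield conditions (3) and (4) here, the remark after Proposition \ref{bc2.2} licensing the use of an arbitrary $A^-\in A\{1\}$ in place of a reflexive inverse. Substituting $\widehat{A}=(I+\varepsilon A_1)GH(I+\varepsilon A_2)$ into the auxiliary matrix $H(I+\varepsilon A_2)\widehat{A}(I+\varepsilon A_1)G$ appearing there shows that it factors as $[H(I+\varepsilon A_2)(I+\varepsilon A_1)G]^{2}=\widehat{Z}^{2}$, so its invertibility is equivalent to that of $\widehat{Z}$, and the displayed formula $\widehat{A}^{\#}=(I+\varepsilon A_1)G\widehat{Z}^{-2}H(I+\varepsilon A_2)$ drops out of the expression given in the preceding proposition.

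For $(2)\Leftrightarrow(3)$ I would exploit $\widehat{A}^{\scriptsize\textcircled{\tiny \#}}=\widehat{A}^{\|(\widehat{A},\widehat{A}^{\rm T})}$ and apply Proposition \ref{bc2.2} with $\widehat{B}=\widehat{A}$, $\widehat{C}=\widehat{A}^{\rm T}$. The two Jacobson-radical conditions in that proposition collapse to the single identity $(I-AA^-)A_0(I-A^-A)=0$, as the condition on $\widehat{C}$ is merely the transpose of the one on $\widehat{B}$. Moreover, over $\mathbb{R}$ the existence of $A^{\scriptsize\textcircled{\tiny \#}}=A^{\|(A,A^{\rm T})}$ is equivalent to the existence of $A^{\#}$, since both are characterized by ${\rm i}(A)\le 1$. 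For the closed form of $\widehat{A}^{\scriptsize\textcircled{\tiny \#}}$, I would emulate the classical identity $A^{\scriptsize\textcircled{\tiny \#}}=G(HG)^{-1}(G^{\rm T}G)^{-1}G^{\rm T}$ for a full rank decomposition by checking that $\widehat{X}:=\widehat{G}\widehat{Z}^{-1}(\widehat{G}^{\rm T}\widehat{G})^{-1}\widehat{G}^{\rm T}$ satisfies the three defining core-inverse equations $\widehat{X}\widehat{A}^{2}=\widehat{A}$, $\widehat{A}\widehat{X}^{2}=\widehat{X}$ and $(\widehat{A}\widehat{X})^{\rm T}=\widehat{A}\widehat{X}$ in $\mathbb{D}^{n\times n}$; expanding $\widehat{G}^{\rm T}\widehat{G}=G^{\rm T}G+\varepsilon(G^{\rm T}A_1G+G^{\rm T}A_1^{\rm T}G)$ then recovers the stated expression.

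The principal obstacle is twofold. First, one must confirm that $\widehat{G}^{\rm T}\widehat{G}$ is invertible in $\mathbb{D}^{r\times r}$, with $r$ the rank of $A$; this follows because $G$ has full column rank, so $G^{\rm T}G$ is invertible over $\mathbb{R}$, and the dual perturbation is inverted by the identity $(a+j)^{-1}=(1+a^{-1}j)^{-1}a^{-1}$ recalled in the Introduction. Second, the three core-inverse equations must survive the $\varepsilon$-expansion without residual nonzero terms; this is mechanical in view of $\varepsilon^{2}=0$, each identity reducing to the pair of ingredients $\widehat{H}\widehat{G}=\widehat{Z}$ and the symmetry $(\widehat{G}^{\rm T}\widehat{G})^{\rm T}=\widehat{G}^{\rm T}\widehat{G}$, which are precisely what hypothesis (4) supplies.
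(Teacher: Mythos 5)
Your proposal is correct and follows essentially the route the paper intends: specialize the preceding proposition to $\widehat{D}=\widehat{A}$ (noting the auxiliary matrix there factors as $\widehat{Z}^{2}$) and Proposition \ref{bc2.2} to the pair $(\widehat{A},\widehat{A}^{\rm T})$, then lift the classical full-rank-factorization formulas via $\widehat{G}=(I+\varepsilon A_1)G$, $\widehat{H}=H(I+\varepsilon A_2)$. Your direct verification that $\widehat{G}\widehat{Z}^{-1}(\widehat{G}^{\rm T}\widehat{G})^{-1}\widehat{G}^{\rm T}$ satisfies the three core-inverse equations, using only $\widehat{H}\widehat{G}=\widehat{Z}$ and the invertibility of $\widehat{G}^{\rm T}\widehat{G}$, is sound and fills in exactly what the paper leaves implicit.
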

\begin{proof} The equivalence between (1) and (4) is clear by Proposition \ref{bc4.6}.  Taking $\widehat{D}=\widehat{A}$ in Proposition \ref{bc4.6} yields   $\widehat{A}^{\#}=(I+\varepsilon A_1)G\widehat{Z}^{-2}H(I+\varepsilon A_2)$. It follows from \cite[Theorem 3.7]{WangGao2023} that $\widehat{A}^{\scriptsize\textcircled{\tiny \#}}=\widehat{A}^{\#}\widehat{A}\widehat{A}^{\dagger}$, and consequently  $$\widehat{A}^{\scriptsize\textcircled{\tiny \#}}=(I+\varepsilon A_1)G\widehat{Z}^{-1}(G^{\mathrm T}G +\varepsilon G^{\mathrm T}A_1^{\mathrm T}G+\varepsilon G^{\mathrm T}A_1G)^{-1}G^{\mathrm T}(I+\varepsilon A^{\mathrm T}_1).$$
\end{proof}

We now consider the Drazin inverse of a dual square matrix. For $\widehat{A}=A+\varepsilon A_0\in \mathbb{D}^{n\times n}$, we obtain $\widehat{A}^{s}=A^s+\varepsilon({\sum\limits_{i=0}^{s-1} A^iA_0A^{s-1-i}})$ by an inductive argument. If ${\rm i}(A)=k$, then $\widehat{A}^{2k}=A^{2k}+\varepsilon({\sum\limits_{i=0}^{2k-1} A^iA_0A^{2k-1-i}})$ is regular due to $(1-AA^D)({\sum\limits_{i=1}^{2k-1} A^iA_0A^{2k-1-i}})(1-AA^D)=0$. By Corollary \ref{bc3.6}, we get that $\widehat{A}^D$ exists with ${\rm i}(\widehat{A})\leq 2k$, meaning that $\widehat{A}^{\|\widehat{A}^{2k}}$ exists. It is worth noting that Zhong and Zhang \cite{Zhongjin2023} also investigated a similar problem, namely the existence of $\widehat{A}^{\|\widehat{A}^{k}}$. The above results thus offer potential methods for future investigation of  the DMP inverse \cite{MT} of a dual matrix.

The idempotence of generalized inverses is a topic of ongoing interest in this field (cf. \cite{Baksalary2020,Bernstein2018,MosicZhang2024}). Using Corollaries \ref{bc3.9} and \ref{bc3.10}, one may investigate the idempotence of generalized inverses of dual matrices; we omit the details here. Additionally, we do not present the results concerning the absorption law for $\{2\}$-inverses of dual matrices.

Finally, we establish the following two results regarding  the clean property of $\widehat{A}=A+\varepsilon A_0\in \mathbb{D}^{n\times n}$.

\begin{itemize}
	\item [{\rm (1)}] $\widehat{A}$ is strongly clean, since $\widehat{A}^D$ exists (cf. \cite[Theorem 1]{Nicholson1999}).
	\item [{\rm (2)}] $\widehat{A}$ is special clean if and only if $\widehat{A}$ is regular, since each square real matrix is special clean.
\end{itemize}

\subsection*{Acknowledgment}
The authors would like to  thank Dr. Shuxian Xu for meaningful suggestions.The first   author is supported by the National Natural Science Foundation of China (Grant No. 12171083). The second author has been partially supported by Universidad Nacional de La Pampa (Grant Resol. 172/2024), by Grant PGI 24/ZL22, Departamento de Matem\'atica, Universidad Nacional del Sur (UNS), Argentina; by Ministerio de Ciencia, Innovaci\'on y Universidades of Spain (Grant Redes de Investigaci\'on, MICINN-RED2022-134176-T), and by Universidad Nacional de R\'{\i}o Cuarto (Grant Resol. RR 449/2024).


\begin{thebibliography}{s2}
	\bibitem{BK1979} Baksalary, J.K.,  Kala, R.: The Matrix Equation $AX-YB$. Linear Algebra Appl. 25, 41--43 (1979)
	
	
	\bibitem{BT} Baksalary, O.M.,  Trenkler, G.: Core inverse of matrices. Linear Multilinear Algebra 58(6), 681--697 (2010)
	
	
	
	\bibitem{Baksalary2020}  Baksalary, O.M., Trenkler,  G.: On matrices whose Moore-Penrose inverse is idempotent. Linear Multilinear Algebra 70(11), 1--13 (2020)
	
	\bibitem{BG} Ben-Israel,  A., Greville, T.N.E.:  Generalized Inverses: Theory and Applications, 2nd ed.  Springer-Verlag, New York (2003)
	\bibitem{Bernstein2018}  Bernstein, D.S.: Scalar, Vector, and Matrix Mathematics: Theory, Facts, and Formulas. Princeton University Press, Princeton (2018)
	
	\bibitem{Bibekananda2025} Bibekananda, S., Mosi\'{c}, D.:  On binary relation of dual core generalized inverses. Linear Multilinear Algebra 73(15), 3434--3454 (2025)
	
	\bibitem{Brodsky1998}  Brodsky,V.,  Shoham, M., Derivation of dual forces in robot manipulators. Mech. Mach. Theory 33(8),   1241--1248 (1998)
	
	\bibitem{Clifford1871} Clifford, M.A.:  Preliminary sketch of biquaternions. Proc. Lond. Math. Soc. 1(1),  381--395 (1871)
	
	\bibitem{D}  Drazin, M.P.: Pseudo-inverses in associative rings and semigroups. Amer. Math. Monthly 65, 506--514 (1958)
	
	\bibitem{Drazin2012}  Drazin, M.P.: A class of outer generalized inverses. Linear Algebra Appl. 436(7),  1909--1923 (2012)
	
	\bibitem{Dra2}  Drazin, M.P.: Commuting properties of generalized inverses. Linear Multilinear Algebra 61(12), 1675--1681 (2013)
	
	\bibitem{Falco2018}  de Falco, D.,  Pennestr\'{i}, E., Udwadia, F.E.:  On generalized inverses of dual matrices. Mech. Mach. Theory 123,  89--106 (2018)
	
	\bibitem{GuLuh1987} Gu, Y.L., Luh, J.: Dual-number transformation and its applications to robotics. IEEE J. Robot. Autom. 3(6), 615--623 (1987)
	
	\bibitem{3Hartwig}  Hartwig, R.E.: The group-inverse of a block triangular matrix. In: Current trends in matrix theory (Auburn, Ala., 1986) 137--146, North-Holland, New York (1987)
	
	\bibitem{Hei1986}  Hei$\beta$, H.: Homogeneous and dual matrices for treating the kinematic problem of robots. JFAC Proc. 19(14), 51--55 (1986)
	
	\bibitem{Howie1995}  Howie, J.M.: Fundamentals of semigroup theory.  Clarendon Press, Oxford (1995)
	
	\bibitem{Huy} Huylebrouck, D.: The generalized inverse of a sum with radical element: applications. Linear Algebra Appl. 246,  159--175 (1996)
	
	\bibitem{HuyPu}  Huylebrouck, D.,  Puystjens, R.: Generalized inverses of a sum with a radical element. Linear Algebra Appl. 84,  289--300 (1986)
	
	\bibitem{2016kewangchen}  Ke, Y.Y.,  Wang, Z.,  Chen, J.L.: The $(b,c)$-inverse for products and lower triangular matrices. J. Algebra Appl.  16(12),  1750222, 17 pp. (2017)
	
	\bibitem{LCWX} Li, T.T.,  Chen, J.L.,  Wang, D.G., Xu, S.Z.: Core and dual core inverses of a sum of morphisms. Filomat 33(10),  2931--2941 (2019)
	
	\bibitem{MT}  Malik, S.B.,  Thome, N.: On a new generalized inverse for matrices of an arbitrary index. Appl. Math. Comput. 226 575--580 (2014)
	
	\bibitem{Mary}  Mary, X.: On generalized inverse and Green's relations. Linear Algebra Appl.  434(8), 1836--1844 (2011)
	
	\bibitem{Mary2012}  Mary, X.,  Patr\'{i}cio, P.: The inverse along a lower triangular matrix. Appl. Math. Comput. 219, 886--891 (2012)
	
	\bibitem{Mary2020}  Mary, X.: Characterizations of clean elements by means of outer inverses in rings and applications. J. Algebra Appl. 19(7),  2050134  (2020)
	
	\bibitem{Mosic2018}  Mosi\'{c}, D., Zou, H.L., Chen,  J.L.: On the $(b,c)$-inverse on rings. Filomat  32(4), 1221--1231 (2018)
	
	\bibitem{MosicZhang2024}  Mosi\'{c}, D.,  Zhang, D.C.,  Hu, J.P.: On operators whose core-EP inverse is $n$-potent. Miskolc Mathematical Notes 25(2),  943--955 (2024)
	
	\bibitem{Nicholson1999}  Nicholson, W.K.: Strongly clean rings and fitting's lemma. Comm. Algebra 27(8),  3583--3592 (1999)
	
	\bibitem{P} Penrose, R.: A generalized inverse for matrices. Proc. Cambridge Philos. Soc. 51, 406--413 (1955)
	
	\bibitem{Pennestri2016}  Pennestr\'{i}, E.,  Valentini, P.P.,  Figliolini, G., Angeles, J.: Dual Cayley-Klein parameters and M\"{o}bius transform: theory and applications. Mech. Mach. Theory 106,   50--67 (2016)
	
	\bibitem{RDD}  Raki\'{c}, D.S.,  Din\v{c}i\'{c}, N.\v{C}.,  Djordjevi\'{c}, D.S.: Group, Moore-Penrose, core and dual core inverse in rings with involution. Linear Algebra Appl. 463,  115--133 (2014)
	
	\bibitem{UdMMT2021} Udwadia, F.E.: Dual generalized inverses and their use in solving systems of linear dual equation. Mech. Mach. Theory 156, 104158 (2021)
	
	\bibitem{UdPen2020} Udwadia,  F.E., Pennestr\'{i}, E., de Falco, D.:  Do all dual matrices have dual Moore-Penrose generalized inverses. Mech. Mach. Theory 151, 103878 (2020)
	
	
	\bibitem{Wanglong2017} Wang,  L.,  Castro-Gonz\'alez, N.,  Chen, J.L.: Characterizations of outer generalized inverses. Canad. Math. Bull.  60(4),  861--871 (2017)
	
	\bibitem{WangGao2023} Wang,  H.X., Gao,  J.: The dual index and dual core generalized inverse. Open Math.  21(1),  20220592, 23 pp. (2023)
	
	\bibitem{Wanghongxing2021}  Wang, H.X.: Characterizations and properties of the MPDGI and DMPGI. Mech. Mach. Theory 158,  104212 (2021)
	
	\bibitem{Wynn1971} Wynn, P.: Upon the generalized inverse of a formal power series with vector valued coefficients. Compositio Math. 23(4), 453--460 (1971)
	
	\bibitem{XCZ}  Xu, S.Z., Chen, J.L., Zhang, X.X.: New characterizations for core inverses in rings with involution. Front. Math. China 12(1), 231-246 (2017)
	
	\bibitem{YCh} You, H., Chen, J.L.: Generalized inverses of a sum of morphisms. Linear Algebra Appl. 338, 261--273 (2001)
	
	\bibitem{Zhuhaiyang2022}  Zhu, H.Y., Chen, J.L., Zhou, Y.K.: On elements whose $(b, c)$-inverse is idempotent in a monoid. Filomat 36(14),  4645--4653 (2022)
	
	\bibitem{Zhongjin2022}  Zhong, J., Zhang, Y.L.: Dual group inverses of dual matrices and their applications in solving systems of linear dual equations. AIMS Math. 7(5),  7606--7624 (2022)
	
	\bibitem{Zhongjin2023} Zhong, J.,  Zhang, Y.L.: Dual Drazin inverses of dual matrices and dual Drazin-inverse solutions of systems of linear dual equations. Filomat 37(10),  3075--3089 (2023)
	
	\bibitem{Zhouyukun2024} Zhou, Y.K., Chen,  J.L.:  Generalized inverses and units in a unitary ring. Acta Math. Sin. Engl. Ser. 40(4),
	1000--1014 (2024)
	
	\bibitem{Zhouyukun2025} Zhou, Y.K., Chen,  J.L., Thome, N.:  Additive properties and absorption laws for generalized inverses. Linear Multilinear Algebra 74(2), 213--227  (2026)
	
	\bibitem{ZhuHuiHui2023} Zhu, H.H.: The $(b,c)$-core inverse and its dual in semigroups with involution. J. Pure Appl. Algebra 228(4), 107526 (2023)
	
	\bibitem{ZhuPa2018} Zhu, H.H., Patr\'{i}cio, P.: A note on clean elements and inverses along an element. Filomat 32(12),  4285--4290 (2018)
	
	\bibitem{ZhuWu2021} Zhu,  H.H., Wu, L.Y.,  Peng, F.,  Patr\'{i}cio, P.: Characterizations and representations of left and right hybrid $(b,c)$-inverses in rings. Rev. R. Acad. Cienc. Exactas F'{i}s. Nat. Ser. A Mat. RACSAM 115(1),  No. 22 (2021)
	
	\bibitem{ZZPa2019} Zhu, H.H.,Zou,  H.L., Patr\'{i}cio, P.:  Generalized inverses and their relations with clean decompositions. J. Algebra Appl. 18(7), 1950133 (2019)
	
	
\end{thebibliography}
\end{document}